\documentclass[11pt]{article}
\usepackage[margin=1in]{geometry}
\usepackage{amsmath,amssymb,amsthm,mathtools,booktabs,microtype}
\usepackage{xurl}
\usepackage[hidelinks]{hyperref}
\newtheorem{theorem}{Theorem}[section]
\newtheorem{lemma}[theorem]{Lemma}
\newtheorem{proposition}[theorem]{Proposition}
\newtheorem{corollary}[theorem]{Corollary}
\theoremstyle{definition}

\theoremstyle{remark}
\newtheorem*{remark}{Remark}
\numberwithin{equation}{section}
\newcommand{\Z}{\mathbb Z}
\newcommand{\R}{\mathbb R}
\newcommand{\F}{\mathbb F}
\newcommand{\ones}{\mathbf 1}
\DeclareMathOperator{\conv}{conv}
\DeclareMathOperator{\wt}{wt}

\title{Integrality, smoothness and normality bounds\\for cube-truncated Hadamard simplices}
\author{Nikita Lebedev}
\date{September 2026}
\hypersetup{
 pdftitle={Integrality, smoothness and normality bounds for cube-truncated Hadamard simplices},
 pdfauthor={Nikita Lebedev},
 pdfsubject={Lattice polytopes and integer decomposition},
 pdfkeywords={Hadamard simplices, lattice polytopes, normality, smoothness}
}
\begin{document}
\maketitle
\begin{abstract}
Santos asked when intersections of dilated Hadamard simplices with cubes
are integral, smooth, or normal, in a prescribed affine lattice. We
construct a nonintegral example in dimension eleven and prove that no
smaller-dimensional example exists. We characterize smoothness completely
and show that every smooth member of this family is normal. An explicit
example in dimension fifteen shows that integrality alone does not imply
normality.

For Sylvester simplices of order at least sixteen, we establish a sharp
uniform integrality bound and construct counterexamples immediately below
it. We also obtain sufficient normality bounds for general Hadamard
simplices and stronger bounds for the Sylvester family. The proofs use
integer decomposition for boxes with separated corner cuts and rounding
under three signed slab constraints. All numbered results have formal
counterparts verified in Lean.
\end{abstract}
\noindent\textbf{MSC 2020.} Primary 52B20; Secondary 05B20, 14M25.\\
\textbf{Keywords.} Hadamard simplices, lattice polytopes, integer
 decomposition property, normality, smoothness, Sylvester matrices.

\section{Introduction}\label{intro:results}
Oda's normality conjecture asks whether every smooth lattice polytope
is normal. Santos studied cube-truncated Hadamard simplices as possible
counterexamples and asked whether they are always lattice polytopes,
and which parameters make them normal or smooth~\cite[Question~5]{Santos}.
We retain his specified ambient lattice throughout.

\subsection{The family and its lattice}\label{common:family}
Mark a column of a real Hadamard matrix $H$ of order $N$, normalize it
to ones by row signs, and delete it. For $d=N-1$, the remaining rows
$v_0,\ldots,v_{N-1}\in\{\pm1\}^d$ satisfy
\begin{equation}\label{common:gram}
 v_i\cdot v_j=N\delta_{ij}-1,\qquad \sum_i v_i=0.
\end{equation}
They form the simplex $\Delta=\conv\{v_i\}$, with facet inequalities
$v_i\cdot x\ge-1$. For positive odd integers $n<m<dn$, set
\begin{equation}\label{common:source}
 P_{m,n}=m\Delta\cap[-n,n]^d,\qquad \Lambda_n=(n+2\Z)^d.
\end{equation}
Fix the marked column. Signed row/column (Hadamard) equivalence allows
permutations and sign changes of rows and columns. We call $x$ the
source coordinates and use the integer chart
\begin{equation}\label{common:chart}
 Q_{m,n}=\frac{P_{m,n}+n\ones}{2}
 =\{y\in[0,n]^d:v_i\cdot y\ge b_i\ \forall i\},\qquad
 b_i=\frac{n\sum_j(v_i)_j-m}{2}.
\end{equation}
Normality means the ambient integer decomposition property (IDP): for every
integer $k\ge1$, every $Z\in kQ_{m,n}\cap\Z^d$ is a sum of $k$ points of
$Q_{m,n}\cap\Z^d$. At degree $k$, the source lattice is
$(kn+2\Z)^d$ and the chart is $z\mapsto(z+kn\ones)/2$.
We do not replace the ambient lattice by the subgroup generated by the
vertices. A lattice polytope is smooth if the primitive facet normals
at each vertex form a basis of the dual lattice, identified with $\Z^d$
in the integer chart.

\subsection{Results and approach}
We answer Santos's Question~5(1) negatively: the counterexample in
Proposition~\ref{base:paley-counter} is nonintegral, and dimension eleven
is minimal (Corollary~\ref{base:min-dimension}). We answer Question~5(3)
completely: smoothness holds exactly when
$m>(d-1)n/2$ and $m/n\notin2\Z+1$
(Theorem~\ref{base:smoothness}). Every smooth member is normal
(Corollary~\ref{common:upper-normality}), whereas integrality alone does
not imply normality (Proposition~\ref{common:nonnormal-example}).
For Sylvester orders $N\ge16$, Theorem~\ref{syl:all-orders} gives the
sharp uniform integrality band $m\ge(3N/8-1)n$.
Question~5(2), concerning normality, remains partly open: that theorem
and Theorem~\ref{syl:generic} give sufficient normality bands.

The decomposition proofs use separated corner cuts
(Theorem~\ref{common:strict-cuts}) and rounding under at most three
signed slabs (Lemmas~\ref{common:slabs}--\ref{common:rounding}).

\subsection{Relation to earlier results}
Bruns studies chiseling of smooth polytopes~\cite[Section~6]{Bruns}.
His Theorem~6.2 deduces integral closedness of a union from that of
both pieces of a hyperplane cut; it does not infer IDP for a retained
piece from that of the original polytope. Theorem~\ref{common:strict-cuts}
proves IDP directly for a box with strictly separated corner cuts.
We use Orrick's terminology for closed Hadamard quadruples~\cite[Section~3]{Orrick}.
Curtis proves IDP for smooth combinatorial cubes~\cite[Theorem~4.8]{Curtis};
our truncations have $3d+1$ facets
(proof of Theorem~\ref{base:smoothness}), so this result does not directly apply.
Saraf and Varadarajan characterize the integer points of the undilated
Sylvester simplex and give lower bounds on the number of integer points
in its dilates~\cite[Theorems~2 and~4]{SV}.
Here the conclusion is a decomposition of
\emph{every} lattice target in a dilation of the cube intersection.

Results on large integer dilates of a fixed integral polytope,
such as Cox--Haase--Hibi--Higashitani's Theorem~3.2~\cite{CHHH},
also have different hypotheses: varying $m$ with $n$ fixed does not
uniformly dilate the intersection, and integrality must first be proved.
We do not use Santos's lower-region nonnormality assertion for
$m<N/4$~\cite[Corollary~27 and p.~2308]{Santos}.
Our sufficient bands do not classify all parameters or resolve Oda's
conjecture for arbitrary smooth lattice polytopes.

\section{Integrality and the minimum counterexample dimension}\label{base:section}\subsection{A counterexample in the least possible dimension}
\begin{proposition}\label{base:paley-counter}
There is a Hadamard simplex in dimension $11$ for which $P_{m,n}$ is nonintegral whenever $n<m<7n/3$ and $m-n\equiv2\pmod4$. In particular, $P_{5,3}$ is nonintegral.
\end{proposition}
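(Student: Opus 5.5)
We take $H$ to be the Paley Hadamard matrix of order $N=12$, built from the quadratic residues modulo $11$. After normalizing and deleting the marked column, we get eleven rows, $d=11$. The plan is to exhibit an explicit vertex $y^\ast$ of $Q_{m,n}$ that is not in $\Z^d$. Since $Q_{m,n}$ is the affine image of $P_{m,n}$ under the chart \eqref{common:chart}, a nonintegral vertex of $Q_{m,n}$ corresponds to a vertex of $P_{m,n}$ outside $\Lambda_n$, so $P_{m,n}$ is nonintegral. The vertex will be cut out by the following tight constraints.

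\emph{Cube part.} Fix a set $S$ of coordinates, each at $0$ or $n$. We aim for $|S|=8$.

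\emph{Simplex part.} Make three facet inequalities $v_i\cdot y\ge b_i$ tight, for $i$ in a set $T$ with $|T|=3$.

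This leaves a $3\times3$ linear system $M y_F=c$ on the free coordinates $F=[d]\setminus S$. Here $M$ is the $\pm1$ submatrix of the rows $v_i$ ($i\in T$) on the columns $F$, and $c_i=b_i-\sum_{j\in S}(v_i)_j y_j$. A $3\times3$ $\pm1$ matrix has determinant $0$ or $\pm4$. We choose $T$ and $F$ with $\det M=\pm4$, so that $M^{-1}\in\tfrac14\Z^{3\times3}$, more precisely $\tfrac12$ times an integer matrix of a specific parity pattern.

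The key arithmetic step is a congruence computation. Using $b_i=(n\sum_j(v_i)_j-m)/2$ and the fact that each Paley row sum is odd, we show that $c\bmod 2$, as a function of $(m,n)$, satisfies two things when $m-n\equiv2\pmod 4$:
\begin{itemize}
\item $M^{-1}c$ has a coordinate in $\tfrac12+\Z$;
\item the remaining coordinates are integers or half-integers, but in any case not all integers.
\end{itemize}
Concretely, write $m=n+2+4t$. Then $M^{-1}c$ is an affine function of $n$ and $t$, and we identify its fractional part directly. Among the admissible choices, we pick $S$ and $T$ so that this fractional part is exactly $\tfrac12$ in some coordinate. The cyclic symmetry $j\mapsto j+1$ and the multiplier symmetry $j\mapsto rj$ ($r$ a nonzero residue) of the Paley matrix reduce the search to a handful of orbit representatives.

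We then verify that $y^\ast$ is feasible and that it is a vertex. The tight constraints are linearly independent, since $\det M\ne0$ together with the coordinate hyperplanes on $S$ gives eleven independent equations, so $y^\ast$ is a vertex as soon as it lies in $Q_{m,n}$. Feasibility requires three families of inequalities:
\begin{itemize}
\item $0\le y^\ast_j\le n$ for $j\in F$;
\item $v_i\cdot y^\ast\ge b_i$ for the nine rows $i\notin T$;
\item the twelfth, dependent facet, using $\sum_i v_i=0$ from \eqref{common:gram}.
\end{itemize}
Each of these is linear in $(m,n)$ and reduces to an inequality of the form $\alpha m\le\beta n$ or $\alpha m\ge\beta n$. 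The design goal is that the binding ones are exactly $m>n$ and $m<7n/3$, with all others implied by these two. Setting $n=3$, $m=5$ (where $m-n=2$ and $5<7$) then gives the particular case $P_{5,3}$.

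The main obstacle is finding the right combinatorial choice of $(S,T)$, including the $0/n$ pattern on $S$, so that the fractional part is $\tfrac12$ and the feasibility window is precisely $n<m<7n/3$. I would first search by hand, or by a small enumeration modulo the Paley automorphism group, starting from $S$ consisting of coordinates on which the three chosen rows mostly agree. The tightest feasibility inequality should come from a row $i\notin T$ sharing many signs with the rows in $T$; I expect the constant $7/3$ to arise from that row.
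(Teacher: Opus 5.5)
Your plan has a genuine gap: a vertex with only three free coordinates is never nonintegral, so the search over $(S,T)$ with $|S|=8$, $|T|=3$ is provably empty. The obstruction is the parity argument at the end of the proof of Theorem~\ref{base:integrality}, and it holds in every dimension.

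\textbf{Why the $3\times3$ design fails.} With the marked column normalized to ones, normalize one row to $\ones$ by column signs. Then the simplex rows have sums $d=11$ and $-1$. More generally, a column sign change shifts all row sums by $2\pmod 4$ at once. Either way, all $b_i=(n\sum_j(v_i)_j-m)/2$ share one parity. A fixed chart coordinate $y_j\in\{0,n\}$ contributes $(v_i)_jy_j\equiv y_j\pmod 2$, independently of $i$. Hence your restricted right-hand sides $c_i$ also have a common parity. By Lemma~\ref{common:small-matrix}, every row of $M^{-1}$ for an invertible $3\times3$ sign matrix has exactly two nonzero entries $\pm\tfrac12$. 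So each coordinate of $M^{-1}c$ has the form $(\pm c_i\pm c_l)/2\in\Z$. Your ``key arithmetic step'' (a coordinate of $M^{-1}c$ in $\tfrac12+\Z$ when $m-n\equiv2\pmod4$) is therefore false for every choice, and the congruence hypothesis never enters. Lemma~\ref{base:small-sign-basis} gives the same conclusion with four free coordinates, unless the $4\times4$ block is Hadamard ($|\det M|=16$).

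Independently of this obstruction, your proposal only describes a search and never exhibits a vertex. Also, after deleting the column there are twelve normals $v_0,\dots,v_{11}$ in $\R^{11}$, not eleven. With $|T|=3$ the other nine rows already exhaust them, so there is no additional ``dependent'' facet to check.

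\textbf{What is needed.} A nonintegral vertex needs either a $4\times4$ Hadamard block or at least five free coordinates. The paper uses the second route. It fixes only five cube coordinates ($1,2,3,9,11$) and leaves six free, cut out by the six tight rows $0,1,3,9,10,11$, whose restriction has determinant $64$. The vertex is $x=(-n,-n,-n,n-3a,a-n,a-n,n-a,a-n,n,n-a,n)$ with $a=(m-n)/2$. Its six free coordinates are even exactly when $a$ is odd, i.e.\ $m-n\equiv2\pmod4$, so it lies outside $\Lambda_n$. The window $n<m<7n/3$ comes from requiring $a>0$ and the cube bound $n-3a\ge-n$ on a free coordinate. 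It does not come from a simplex row outside the tight set, as you anticipated.
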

\begin{proof}
Let $\chi$ be the quadratic character modulo $11$. Form the order-$12$ Paley matrix with first row and column all ones and remaining block
\[
 B_{ij}=\begin{cases}-1&i=j,\\\chi(j-i)&i\ne j,\end{cases}
 \qquad i,j\in\{0,\ldots,10\}.
\]
Delete its initial constant column, obtaining normals $v_0,\ldots,v_{11}$ in the indicated order. Direct multiplication verifies~\eqref{common:gram}.
Put $a=(m-n)/2$ and consider
\[
 x=(-n,-n,-n,n-3a,a-n,a-n,n-a,a-n,n,n-a,n).
\]
For $0\le a\le2n/3$ this point is in the cube. Its twelve simplex scalar products are
\begin{align*}
 (&-2a-n,-2a-n,4a-n,-2a-n,4a-n,4a-n,\\
  &11n-8a,4a-n,4a-n,-2a-n,-2a-n,-2a-n).
\end{align*}
All are at least $-m=-n-2a$. Coordinates $1,2,3,9,11$ are fixed cube coordinates. On the remaining coordinates $4,5,6,7,8,10$, tight simplex rows $0,1,3,9,10,11$ form the matrix
\[
 \begin{pmatrix}
 1&1&1&1&1&1\\
 1&1&1&-1&-1&1\\
 1&-1&1&1&1&-1\\
 -1&-1&-1&1&-1&1\\
 1&-1&-1&-1&1&-1\\
 1&1&-1&-1&-1&-1
 \end{pmatrix},
 \qquad \det=64.
\]
Thus $x$ is a vertex. If $a$ is odd, its six free coordinates are even, whereas $\Lambda_n$ requires odd coordinates. At $n=3,m=5$, this gives the explicit vertex
\[
 (-3,-3,-3,0,-2,-2,2,-2,3,2,3).
\]
\end{proof}
\begin{remark}[The counterexample in the integer chart]
At $(m,n)=(5,3)$ the displayed vertex has integer coordinates, but
six of them lie outside the odd source coset. In the chart
$y=(x+3\ones)/2$ it becomes
\[
 y=\left(0,0,0,\frac32,\frac12,\frac12,
              \frac52,\frac12,3,\frac52,3\right).
\]
Its six half-integer coordinates make nonintegrality in the required
lattice explicit.
\end{remark}
\subsection{A universal integrality band}
\label{base:integrality-section}

The integrality range extends below the smoothness threshold. The endpoint
in the following theorem is included. We use the elementary fact that a
Hadamard order $N>2$ is divisible by four: after normalizing one row,
orthogonality forces the four sign pairs in two other rows to occur
$N/4$ times each.

\begin{lemma}\label{base:small-sign-basis}
Let $M$ be an invertible $k\times k$ matrix with entries $\pm1$, where
$1\le k\le4$. If $|\det M|=2^{k-1}$, then $M^{-1}q\in\Z^k$ for every vector
$q$ with odd integer entries, and $2M^{-1}$ is integral. Every invertible
such matrix with $k\le3$ has this determinant magnitude. For $k=4$, the only
other possibility is $|\det M|=16$, in which case $MM^{\mathsf T}=4I$.
\end{lemma}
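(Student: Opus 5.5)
The plan is to reduce everything to the parity structure of $\pm1$ matrices by passing to $\{0,1\}$-matrices modulo $2$, and then to read off the determinant classification from Hadamard's bound together with divisibility.

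\emph{Divisibility and the determinant values.} Write $M=J-2E$ entrywise, or more conveniently subtract the first row from every other row. Each difference of two $\pm1$ rows has entries in $\{0,\pm2\}$, so $\det M=2^{k-1}\det M'$ with $M'$ an integer matrix. Hence $2^{k-1}\mid\det M$ for every $\pm1$ matrix. Hadamard's inequality gives $|\det M|\le k^{k/2}$. For $k=1,2,3$ this bound is $1$, $2$ and $3\sqrt3<8$ respectively. So an invertible $M$ has $|\det M|=2^{k-1}$ exactly. For $k=4$ the bound is $16$, so $|\det M|\in\{8,16\}$. Equality in Hadamard's bound forces the rows to be pairwise orthogonal of norm $2$, that is, $MM^{\mathsf T}=4I$.

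\emph{Integrality of the inverse.} Assume $|\det M|=2^{k-1}$. For $2M^{-1}$ to be integral we need each cofactor to be divisible by $2^{k-2}$. Each cofactor is $\pm$ the determinant of a $(k-1)\times(k-1)$ $\pm1$ matrix, which by the divisibility step is a multiple of $2^{k-2}$. Hence $2M^{-1}=2\operatorname{adj}(M)/\det M$ is integral. For $k=1$ this is trivial, since $M^{-1}=\pm1$.

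\emph{Odd right-hand sides.} Write $q=\ones+2r$ with $r\in\Z^k$. Then $M^{-1}q=M^{-1}\ones+(2M^{-1})r$, and the second term is integral. It therefore suffices to show that $M^{-1}\ones\in\Z^k$, that is, that the solution $u$ of $Mu=\ones$ is integral. I would argue by reduction modulo $2$. We know $2u\in\Z^k$. Write $w=2u$, so that $Mw=2\ones$.

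Reducing modulo $2$, with $\bar M$ the all-ones matrix $J$ over $\F_2$, gives $\ones^{\mathsf T}w\equiv0$. This alone does not force $w$ to be even. A cleaner route is the following. Multiply by $\tfrac12(M+J)$, which has entries in $\{0,1\}$. Alternatively, observe that since $\det M=2^{k-1}$ is small, $\Z^k/M\Z^k$ has order $2^{k-1}$. Row reduction $M\mapsto$ (first row, differences) shows $M\Z^k\supseteq\{z:z\equiv c\ones \pmod 2\}$? The direct check is that the lattice $L=\{z\in\Z^k: z_i\equiv z_j \pmod 2\ \forall i,j\}$ has index $2^{k-1}$ in $\Z^k$. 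Moreover $M\Z^k\subseteq L$, because every column of $M$ is odd in every entry. The two lattices have the same index, so $M\Z^k=L$. Since $\ones\in L$, we get $\ones\in M\Z^k$, i.e.\ $M^{-1}\ones\in\Z^k$. This argument in fact gives the full claim at once: every odd $q$ lies in $L=M\Z^k$. So I would use this index argument as the main proof of integrality, and keep the $2M^{-1}$ statement as the cofactor corollary above.

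\emph{Main obstacle.} The only delicate point is the $k=4$ dichotomy, namely ruling out determinant values other than $8$ and $16$. This follows because $|\det M|$ is a multiple of $8$ and at most $16$. The equality case of Hadamard's inequality then gives $MM^{\mathsf T}=4I$. I expect everything else to be routine.
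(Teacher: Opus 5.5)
Your proof is correct. The determinant classification is the same as the paper's: subtracting the first row gives divisibility by $2^{k-1}$, and Hadamard's bound $k^{k/2}$ together with its equality case finishes it.

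The integrality part takes a different route.

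\emph{The paper's route.} It normalizes the first row and column to ones and subtracts the first row. When $|\det M|=2^{k-1}$, the remaining block divided by two is a unimodular $0,-1$ matrix. The reduced system then has right-hand sides $(q_i-q_1)/2\in\Z$, so $z$ is integral. Integrality of $2M^{-1}$ is deduced afterwards from $2e_i=\ones-(\ones-2e_i)$, a difference of two sign vectors.

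\emph{Your route.} You compare indices. $M\Z^k$ lies in the common-parity lattice $L$, and both have index $2^{k-1}$ in $\Z^k$, so $M\Z^k=L$.

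\emph{What each buys.} Your argument is shorter and coordinate-free. It gives the sharper statement that $M^{-1}$ maps every common-parity integer vector into $\Z^k$. That is exactly the form used later in the proof of Theorem~\ref{base:integrality}, where the paper reassembles it from the two separate claims via $q=c\ones+2w$. It also makes your cofactor argument for $2M^{-1}$ redundant, since $2e_i\in L$. The paper's version avoids the index formula $[\Z^k:M\Z^k]=|\det M|$ and exhibits the unimodular structure explicitly, matching the row-reduction style of Lemma~\ref{common:small-matrix}.

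When you write it up, drop the abandoned asides: the $\tfrac12(M+J)$ idea and the inclusion you marked with a question mark. The index argument already settles the point.
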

\begin{proof}
Normalize the first row and column to ones by row and column sign changes.
Subtracting the first row from the other rows shows that $2^{k-1}$ divides
the determinant. If its magnitude is $2^{k-1}$, the remaining
$(k-1)\times(k-1)$ block, divided by two, is a unimodular $0,-1$ matrix.
Applying these operations to $Mz=q$ gives integral right-hand sides
$(q_i-q_1)/2$, so $z$ is integral. Differences of sign vectors include
$2e_i$, proving that $2M^{-1}$ is integral. The case $k=1$ is immediate.
The determinant bound $|\det M|\le k^{k/2}$ gives the listed possibilities.
Equality at $k=4$ forces the rows to be orthogonal.
\end{proof}

\begin{theorem}\label{base:integrality}
Let $\Delta$ be any Hadamard simplex of dimension $d\ge15$, and let $n,m$
be positive odd integers with $n<m<dn$. Then
\[
 m\ge\frac{d-5}{2}n
 \quad\Longrightarrow\quad
 P_{m,n}=m\Delta\cap[-n,n]^d
 \text{ is a lattice polytope in }\Lambda_n=(n+2\Z)^d.
\]
\end{theorem}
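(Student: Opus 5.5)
The plan is to show that every vertex $x$ of $P_{m,n}$ has all coordinates odd, i.e.\ congruent to $n$ modulo $2$. At a vertex, let $F$ be the set of coordinates with $|x_j|<n$ (the free coordinates), and let $T$ be the set of tight simplex rows, $v_i\cdot x=-m$. Since $x$ is a vertex, the restrictions $(v_i)_F$ for $i\in T$ span $\R^F$. So there is a $|F|\times|F|$ invertible $\pm1$ submatrix $M$ of these restricted rows with
\[
 Mx_F=q,\qquad q_i=-m-\sum_{j\notin F}(v_i)_j x_j .
\]
Each $x_j$ with $j\notin F$ equals $\pm n$ and is therefore odd. We must show that $x_F$ is odd. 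Write $x_F=n\ones+2u$, which turns the system into $Mu=(q-nM\ones)/2$. The right-hand side is integral because $q_i+n\sum_{j\in F}(v_i)_j\equiv -m+n\sum_j(v_i)_j\equiv 0\pmod 2$, using $d$ odd and $m,n$ odd. By Lemma~\ref{base:small-sign-basis}, it then suffices to have $|F|\le 4$ and $|\det M|=2^{|F|-1}$. In that case the Lemma, in the parity-compatible form it gives (solutions of $\pm1$ systems with all-odd right-hand side are integral), yields integral $x_F$ of the right parity. I would redo the Lemma's first-row subtraction directly on this system to confirm the parity step.

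The first main step bounds the number of tight rows. For any $k$ tight rows with sum $s=\sum_{i\in T'}v_i$, we have $km=-s\cdot x\le n\|s\|_1$. From~\eqref{common:gram}, $\|s\|_2^2=k(N-1)-k(k-1)=k(N-k)$. For $k=5$ all entries of $s$ are odd. Writing $c_3,c_5$ for the numbers of entries of absolute value $3,5$, we get $d+8c_3+24c_5=5d-20$, and $\|s\|_1=d+2c_3+4c_5$ is maximized at $c_5=0$, $c_3=(d-5)/2$, giving $\|s\|_1\le 2d-5$. Hence $5m\le(2d-5)n$. This contradicts $m\ge(d-5)n/2$ as soon as $5(d-5)/2>2d-5$, i.e.\ $d>15$. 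At $d=15$ equality is forced, so every coordinate is at $\pm n$ and $F=\emptyset$, which is trivially fine. Thus at most four rows are tight, so $|F|\le4$, and for $|F|\le3$ the Lemma applies directly.

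The remaining and hardest step is $|F|=4$ with $|\det M|=16$, where the restricted rows satisfy $MM^{\mathsf T}=4I$. The $\ell_1$ count for $k=4$ gives only $\|s\|_1\le 2d-6$, which does not exclude this case in the band. I would try first to exploit the orthogonality of the four restricted rows on $F$ together with $v_i\cdot v_j=-1$ globally. This forces the restrictions to the $d-4$ fixed coordinates to have inner products $-1$ and norms $d-4$. I would then redo the $\ell_1$ estimate using only the fixed coordinates, which contribute at most $n\cdot\|s_{\bar F}\|_1$, and the free coordinates, where $|x_j|<n$ is strict. This should give a strict inequality $4m<$ (available bound), contradicting tightness. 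If that fails, the fallback is to compute $M^{-1}=M^{\mathsf T}/4$ explicitly. One would then check that $q-nM\ones$ is divisible by $4$ in the needed pattern, using the mod-$4$ structure of Hadamard rows ($N\equiv0\pmod4$) to control $\sum_{j\notin F}(v_i)_j$ modulo $4$.
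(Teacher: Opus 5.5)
Your reduction to $|F|\le4$ matches the paper: the five-row bound $\|s\|_1\le2d-5$ and the equality case at $d=15$. The decisive case $|F|=4$, $|\det M|=16$ is left open, however, and neither route you propose closes it.

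\textbf{The refined $\ell_1$ estimate.} It only disposes of configurations in which some column sum has $|s_j|=4$. Then $\|s\|_1\le2(d-5)$, and since $s_F\ne0$ while $|x_j|<n$ on $F$, you get $4m<2(d-5)n$, contradicting the band. The residual configuration is different. After coordinate sign changes, $s_j=2$ on a set $U$ of $d-3$ columns containing $F$, and $s_j=0$ on the remaining three columns $Z$. There the estimate yields only $4m<2(d-3)n$, i.e.\ $m<(d-3)n/2$. This is compatible with the band: at $d=15$, any odd $m$ with $5n\le m<6n$ survives. No nonnegative combination of the four tight inequalities, bounded termwise by the cube, does better here. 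The needed information sits in the columns $Z$, which cancel from $s\cdot x$.

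\textbf{The fallback.} This cannot work at all. As stated it uses only congruences and never the size of $m$. Yet the Sylvester sharpness construction in Theorem~\ref{syl:all-orders} gives, at $d=15$ and $m=5n-2$ (just below the band), a vertex with exactly four free coordinates, $MM^{\mathsf T}=4I$, and free coordinates of the wrong parity. The determinant-$16$ case therefore has to be excluded using $m\ge(d-5)n/2$; it cannot be shown integral.

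\textbf{The missing idea.} Use the tight equations individually, through signed combinations, in the residual configuration.
\begin{itemize}
\item The Gram relations give $2\sum_{j\in U}A_{ij}=d-3$ for each selected row. So the coefficients $1-A_{ij}\in\{0,2\}$ on $U$ sum to $(d-3)/2$.
\item Summing the four tight equations gives $\sum_{j\in U}x_j=-2m$. Hence $\sum_{j\in U}(1-A_{ij})x_j=-m+nz_i$, with $z_i=\sum_{j\in Z}A_{ij}x_j/n$.
\item The $z_i$ are odd and sum to zero, since $s_j=0$ on $Z$. So some $z_i\le-1$.
\item That row has a $-1$ in a free column. Otherwise its restriction to $F$ would be $\ones$, which is half the sum $2\ones$ of the four restricted rows, contradicting independence.
\item Strictness of the cube bound in that column gives $-(d-3)n/2<-m-n$, i.e.\ $m<(d-5)n/2$, a contradiction.
\end{itemize}

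\textbf{A smaller point on parity.} In your parity step, integrality of $(q-nM\ones)/2$ is not what Lemma~\ref{base:small-sign-basis} needs; its entries must share a parity. This does hold: two distinct rows differ in exactly $N/2$ coordinates, an even number since $4\mid N$. The paper obtains the same fact by normalizing one row to $\ones$, so that the row sums are $d$ and $-1$.
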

\begin{proof}
Fix a vertex $x$. Include all active cube normals in a basis of active
constraints, and complete it with $k$ simplex normals. Their restriction
to the $k$ free coordinates $F$ is an invertible sign matrix $M$;
$-n<x_j<n$ on $F$, and all other coordinates are at cube bounds.
If $k=0$, $x\in\Lambda_n$ already.

We first show $k\le4$. For five distinct selected normals let
$s=v_1+\cdots+v_5$. The Gram equations give
$\sum_j s_j^2=5(d-4)$. Each $s_j$ is odd, so
$4|s_j|\le s_j^2+3$ and $\|s\|_1\le2d-5$.
Their tight equations imply
\begin{equation}\label{base:five-facet-bound}
 5m=-s\cdot x\le n\|s\|_1\le(2d-5)n.
\end{equation}
For $d>15$ this contradicts the assumed band. For $d=15$,
equality is necessary. Since every $s_j$ is odd and nonzero, equality
in the cube bound forces every coordinate to a cube endpoint,
contradicting $k\ge5$. Hence $k\le4$, including at the endpoint.

It remains to exclude the exceptional case $k=4$, $|\det M|=16$.
Let $A$ consist of the four selected full rows, and put
$s_j=\sum_i A_{ij}$. Then
\[
 AA^{\mathsf T}=(d+1)I-J,\qquad MM^{\mathsf T}=4I,\qquad
 \sum_j s_j^2=4(d-3),\quad \sum_{j\in F}s_j^2=16.
\]
Here $J$ is the $4\times4$ all-ones matrix and
$s_j\in\{0,\pm2,\pm4\}$. If any $|s_j|=4$, the inequality
$2|s_j|\le s_j^2$ improves by eight at that coordinate and gives
$\|s\|_1\le2(d-5)$. Thus
$4m=-s\cdot x\le2(d-5)n$. Equality is again necessary, forcing
$s_j=0$ on $F$, contrary to its square sum of 16.

Coordinate sign changes preserve the cube, $\Lambda_n$, the free
coordinates and $|\det M|$. Make such
changes so that every nonzero column sum is two.
Let $U$ and $Z$ index the columns of sum two and zero, respectively.
The square-sum identities give $|U|=d-3$, $|Z|=3$, and $F\subset U$.
The Gram identity also gives, for each row $i$,
\[
 2\sum_{j\in U}A_{ij}=d-3,\qquad
 \sum_{j\in U}(1-A_{ij})=(d-3)/2.
\]
Since the coordinates indexed by $Z$ are fixed, write $x_j=n\eta_j$
there and set $z_i=\sum_{j\in Z}A_{ij}\eta_j$. The four integers
$z_i$ are odd and sum to zero; choose $i$ with $z_i\le-1$.
This row has a negative entry in some free column. Otherwise its
restriction to $F$ would be $\ones$, and the sum of all four restricted
rows, which is $2\ones$, would contradict their independence.

The nonnegative coefficients $1-A_{ij}$ therefore include a positive
coefficient at a free coordinate. The strict cube bound gives
\[
 -\frac{d-3}{2}n
 <\sum_{j\in U}(1-A_{ij})x_j
 =-m+nz_i
 \le-m-n
 \le-\frac{d-3}{2}n,
\]
a contradiction. The equality follows by summing the four tight row
equations to get $\sum_{j\in U}x_j=-2m$, and using
$\sum_{j\in U}A_{ij}x_j=-m-nz_i$.

Finally normalize any full simplex row to $\ones$ by coordinate signs.
The row sums are $d$ and $-1$. Since the Hadamard order $d+1$ is
divisible by four, the right-hand sides in~\eqref{common:chart},
$b_i=(n\sum_jv_i(j)-m)/2$, share one parity.
The fixed chart coordinates are $0$ or $n$; subtracting their signed
contributions preserves common parity in the restricted system.
The exclusion above and Lemma~\ref{base:small-sign-basis} give
$|\det M|=2^{k-1}$. That lemma makes the solution integral: write any
common-parity vector as $q=c\ones+2w$, with $c\in\Z$, $w\in\Z^k$;
then both $M^{-1}\ones$ and $2M^{-1}w$ are integral.
All chart coordinates are therefore integers, so every vertex lies
in $\Lambda_n$.
\end{proof}

\begin{proposition}\label{base:d7-integral}
Every Hadamard truncation in dimension seven is a lattice polytope in
$\Lambda_n$ for positive odd $n<m<7n$.
\end{proposition}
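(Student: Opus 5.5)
The argument follows the vertex analysis in the proof of Theorem~\ref{base:integrality}, but uses the small dimension instead of the slab bound to control the determinant. Fix a vertex $x$ of $P_{m,n}$ with $d=7$, $N=8$. As before, include every active cube normal in a basis of active constraints and complete it with $k$ simplex normals. Restricted to the free coordinates $F$, these normals give an invertible $\pm1$ matrix $M$ of size $k$. The final paragraph of that proof only needs $|\det M|=2^{k-1}$: the right-hand sides $b_i$ share one parity because $8$ is divisible by four, and the fixed chart coordinates are $0$ or $n$. Lemma~\ref{base:small-sign-basis} then makes the chart solution integral. So the task is to show that $|\det M|=2^{k-1}$ for every possible $k$.

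First I would note that in dimension seven all Hadamard matrices of order $8$ are equivalent to the Sylvester matrix. With the normalized column deleted, the normals $v_i$ are then the rows of the $8\times7$ matrix $(-1)^{\langle a,b\rangle}$, where $a\in\F_2^3$ indexes rows and $b$ ranges over the nonzero vectors of $\F_2^3$. This lets me argue about an explicit matrix. An alternative that avoids the classification is to observe that any $k\times k$ submatrix of the $8\times7$ matrix is a submatrix of an $8\times8$ Hadamard matrix $H$. If $|\det M|>2^{k-1}$, the cofactor relation $\det(H_{S,T})=\pm\det H\cdot\det(H^{-1})_{T^c,S^c}$, with $H^{-1}=H^{\mathsf T}/8$, ties minors of $H$ to complementary minors. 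This is the route I would try first.

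Next I would bound $k$ and classify the bad cases. By Lemma~\ref{base:small-sign-basis}, if $k\le3$ there is nothing to prove. For $k=4$ the only bad case is $MM^{\mathsf T}=4I$. For $k\ge5$ I need to exclude larger determinants directly. The Sylvester structure makes this concrete. Restricted to a column set $T$, a row is the character $a\mapsto(-1)^{\langle a,b\rangle}$ evaluated on $b\in T$. Writing $x=y$ with $y$ in the chart, the minors of the order-$8$ Sylvester matrix can be computed via the duality identity above: a $k\times k$ minor equals $8^{k}/8^{4}\cdot\det H$ times a complementary $(8-k)$-minor, up to sign. Taking magnitudes gives $|\det H_{S,T}|=8^{k-4}\cdot 8^{4}\cdot|\text{complementary minor}|/8^{k}\cdot$ const. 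Complementary minors of size $\le3$ are $2^{7-k}$, which pins $|\det M|$ to $2^{k-1}$ whenever $8-k\le3$, that is, for $k\ge5$, provided the complementary block is invertible (it must be, since $\det M\neq0$). For $k\le3$ this is the lemma. So the only genuine case is $k=4$ with a $4\times4$ Hadamard submatrix.

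The main obstacle is the case $k=4$ with $MM^{\mathsf T}=4I$. Here I would use the slab argument from the proof of Theorem~\ref{base:integrality}. Let $A$ be the four full rows, with column sums $s_j\in\{0,\pm2,\pm4\}$ satisfying $\sum s_j^2=4(d-3)=16$ and $\sum_{j\in F}s_j^2=16$. Then $s_j=0$ off $F$ and $|F|=4$, which leaves three fixed columns $Z$. Summing the four tight equations gives $4m=-s\cdot x$ with $s$ supported on $F$, so all the constraint content sits on a $4\times4$ Hadamard block. In the Sylvester model, four rows whose restriction is Hadamard with zero column sums off $F$ correspond to an affine plane in $\F_2^3$ and $F$ to a coset structure. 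I would then check directly that the resulting vertex coordinates solve $Mz=q$ with common-parity $q$. Since $M^{-1}=M^{\mathsf T}/4$ and $M^{\mathsf T}\ones$ is $\pm4e_j$-type for a normalized Hadamard block, integrality reduces to a congruence of $q$ modulo $4$. That congruence should follow from the fixed $Z$-coordinates being $\pm n$ with $n,m$ odd. If it fails, I would instead show that such a configuration is never a vertex, by exhibiting a feasible direction along $F$ in the kernel of the other active constraints. This congruence check is the step I expect to require explicit computation.
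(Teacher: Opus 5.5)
Your proposal has a genuine gap in the case $k\ge5$, and the target you set there cannot be met. Jacobi's complementary-minor identity, with $|\det H|=8^4$ and $H^{-1}=H^{\mathsf T}/8$, gives $|\det H_{S,T}|=8^{k-4}\,|\det H_{S^c,T^c}|$. When the complementary block of order $8-k\le3$ is invertible, this equals $8^{k-4}\cdot2^{7-k}=2^{2k-5}$, not $2^{k-1}$. For example, it is $32$ rather than $16$ at $k=5$, and $512$ at $k=7$.

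So every invertible $k\times k$ minor with $k\ge5$ is too large. After the normalization in the proof of Lemma~\ref{base:small-sign-basis}, the reduced $0,-1$ block has determinant $\pm2^{k-4}\neq\pm1$, and common-parity right-hand sides no longer force integral solutions. The lemma itself is also stated only for $k\le4$. The five-facet bound~\eqref{base:five-facet-bound} does not rescue this: at $d=7$ it reads $5m\le9n$, which leaves $n<m\le9n/5$ untouched, e.g.\ $(m,n)=(5,3)$. What you need is that $k\ge5$ never occurs at all.

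The paper gets this from a fact specific to dimension seven. Three distinct normals $u,v,w$ agree in exactly one coordinate, because $\frac14\sum_j(1+u_jv_j+u_jw_j+v_jw_j)=(7-3)/4=1$. Five tight simplex inequalities make five barycentric coordinates $\lambda_i=(v_i\cdot x+m)/(8m)$ vanish. Then $x$ lies in the convex hull of at most three points $mv_i$ (enlarge the support to a triple if necessary). In their common coordinate $|x_j|=m>n$, so $x$ is not in the cube. Hence at most four simplex facets are active, and $k\le4$.

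For $k=4$ with $MM^{\mathsf T}=4I$, you do not need the congruence computation you defer. The exclusion in the proof of Theorem~\ref{base:integrality} uses only $m\ge(d-5)n/2$, which here is $m\ge n$, so it applies verbatim. Your identification of this case with an affine plane of labels picks out only the easy subcase. A closed quadruple has an unmarked column sum $\pm4$ and is ruled out at once, since $4m\le2(d-5)n=4n$ contradicts $m>n$. The subcase that actually needs the row-selection argument (choosing $z_i\le-1$) is the rank-three quadruple, with $|s_j|=2$ on the four columns of $F$.
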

\begin{proof}
Any three distinct Hadamard vertices $u,v,w$ agree in exactly
\[
 \frac14\sum_{j=1}^7(1+u_jv_j+u_jw_j+v_jw_j)
 =\frac{7-3}{4}=1
\]
coordinate. Their scaled triangle in $m\Delta$ therefore has that
coordinate identically $m$ or $-m$, and cannot meet $[-n,n]^7$ when
$m>n$.

For $x\in m\Delta$, its barycentric coordinates are
$\lambda_i=(v_i\cdot x+m)/(8m)$. Five tight simplex inequalities
would leave support on at most three vertices. Extending a smaller
support to a triple if necessary, the preceding observation excludes
this at every feasible point of $P_{m,n}$. Thus at most four simplex
inequalities are active.

Include all active cube normals in a vertex basis as in the proof of
Theorem~\ref{base:integrality}. The remainder of that proof uses
only the bound $k\le4$ and $m\ge(d-5)n/2$; its dimension restriction
was used solely to obtain the active-facet bound. Here $d=7$ and
$m>n=(d-5)n/2$, so the determinant-$16$ exclusion and the final
sign-matrix parity argument apply. Every vertex is in $\Lambda_n$.
\end{proof}

\begin{corollary}\label{base:min-dimension}
Eleven is the least dimension admitting a nonintegral member under
the source assumptions.
\end{corollary}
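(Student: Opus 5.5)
The argument has two parts: existence in dimension eleven, and exclusion of every smaller dimension. Existence is Proposition~\ref{base:paley-counter}, since $P_{5,3}$ in dimension $11$ satisfies the source assumptions $n<m<dn$ with $n=3$, $m=5$, $d=11$. For minimality, recall that a Hadamard matrix of order $N=d+1$ requires $N\in\{1,2\}$ or $4\mid N$. The dimensions below eleven that actually occur are therefore $d=1$ ($N=2$), $d=3$ ($N=4$) and $d=7$ ($N=8$); the degenerate case $N=1$, $d=0$ has no parameters with $n<m<0$. Dimension seven is settled by Proposition~\ref{base:d7-integral}, so it remains to treat $d=1$ and $d=3$ directly.

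\emph{Dimension one.} Here $v_0=1$ and $v_1=-1$, so $\Delta=[-1,1]$ and $P_{m,n}=[-m,m]\cap[-n,n]$. The condition $n<m<dn=n$ is vacuous, so there is no member at all. Even without this observation, every member would equal $[-n,n]$, whose vertices $\pm n$ lie in $\Lambda_n$.

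\emph{Dimension three.} Up to signed equivalence the normals are $(1,1,1)$, $(1,-1,-1)$, $(-1,1,-1)$ and $(-1,-1,1)$. I plan to reuse the vertex-basis argument from the proof of Theorem~\ref{base:integrality}. At a vertex, put all active cube normals into the basis and complete it with $k\le3$ simplex normals. Their restriction $M$ to the $k$ free coordinates is an invertible $\pm1$ matrix. By Lemma~\ref{base:small-sign-basis}, every such matrix with $k\le3$ has $|\det M|=2^{k-1}$. This avoids the determinant-$16$ exclusion entirely, which was the only step in that proof that used the lower bound on $m$.

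The final parity step then applies unchanged. Since $d+1=4$ is divisible by four, the right-hand sides $b_i$ share one parity. Subtracting the fixed chart coordinates $0$ or $n$ preserves common parity. Lemma~\ref{base:small-sign-basis} then makes the chart solution integral. Hence every vertex lies in $\Lambda_n$.

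The main obstacle is making sure this transfer is legitimate. The proof of Theorem~\ref{base:integrality} used $d\ge15$ only to obtain $k\le4$, and it used the band on $m$ only to exclude $|\det M|=16$. Both are automatic when $d=3$, because $k\le d=3$. As a sanity check, I would also verify $d=3$ by hand, for instance at $(m,n)=(3,1)$.

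Combining these cases, no dimension below eleven admits a nonintegral member, while dimension eleven does by Proposition~\ref{base:paley-counter}.
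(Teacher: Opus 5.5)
Your proof is correct. Its overall structure matches the paper's: reduce to $d\in\{0,1,3,7\}$, cite Proposition~\ref{base:d7-integral} for $d=7$, and cite Proposition~\ref{base:paley-counter} for existence. The genuine difference is in dimension three.

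\textbf{The paper's route for $d=3$.} It argues geometrically. Two distinct normals agree in exactly one coordinate, so two tight simplex facets would give $-m=\tfrac12(v+w)\cdot x=\pm x_j\ge-n$, contradicting $m>n$. Hence every vertex is a cube vertex or a cube-edge cut by a single simplex facet, and its free coordinate is visibly odd. This is the upper-regime argument of Theorem~\ref{base:smoothness}, since $(d-1)n/2=n<m$ holds automatically when $d=3$.

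\textbf{Your route for $d=3$.} You rerun the algebraic vertex-basis argument of Theorem~\ref{base:integrality}. The bound $k\le3$ makes $|\det M|=2^{k-1}$ automatic by Lemma~\ref{base:small-sign-basis}, which bypasses the determinant-$16$ exclusion. Common parity of the $b_i$, which follows from $4\mid N$, then finishes the argument. Your justification that this transfer is legitimate is sound.

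\textbf{What each buys.}
\begin{itemize}
\item Your argument never uses $m>n$. It treats $d=3$ with the same sign-matrix and parity mechanism that Proposition~\ref{base:d7-integral} uses for $d=7$.
\item The paper's argument is shorter and more elementary, but it does depend on $m>n$.
\item The paper's argument also yields more structure: at most one simplex facet is active anywhere on $P_{m,n}$. So every three-dimensional member falls under Theorem~\ref{base:smoothness} and Corollary~\ref{common:upper-normality}, and in particular is normal.
\end{itemize}
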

\begin{proof}
For $d>1$, the Hadamard order $d+1$ is divisible by four
(Section~\ref{base:integrality-section}).
Dimensions zero and one have no admissible parameters. The only possible smaller
dimensions are therefore three and seven.

In dimension three, two tight simplex normals $v,w$ would give
$-m=(v+w)\cdot x/2\ge-n$, contrary to $m>n$. Each vertex is thus a
cube vertex or a cut of a cube edge by one simplex inequality, and its
coordinates are odd. Proposition~\ref{base:d7-integral} covers dimension
seven. Proposition~\ref{base:paley-counter} supplies the dimension-eleven
counterexample.
\end{proof}

\subsection{The complete smoothness criterion}
\begin{theorem}\label{base:smoothness}
For any Hadamard simplex and positive odd $n<m<dn$, the polytope $P_{m,n}$ is smooth in $\Lambda_n$ if and only if
\begin{equation}\label{base:smoothness-equation}
 m>\frac{d-1}{2}n\qquad\text{and}\qquad m/n\notin2\Z+1.
\end{equation}
Throughout the strict upper regime $m>(d-1)n/2$, it is a lattice polytope, including at the excluded resonance ratios.
\end{theorem}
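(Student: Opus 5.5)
The plan is to control how many simplex facets can be active at one point of $P_{m,n}$, and then read smoothness off the local normal matrices in the integer chart~\eqref{common:chart}. In that chart the cube normals are the $e_j$ and the simplex normals are the sign vectors $v_i$. At a simple vertex with $k$ active simplex facets, the primitive normal matrix has determinant $\pm\det M$, where $M$ is the $k\times k$ restriction of those simplex rows to the coordinates not fixed by active cube facets. Hence smoothness at a simple vertex amounts to $k\le1$, because for $k\ge2$ Lemma~\ref{base:small-sign-basis} gives $2\mid\det M$.

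\emph{Key bound.} Suppose two distinct simplex facets $i,j$ are tight at a point $x$ of $P_{m,n}$. Put $s=v_i+v_j$. By~\eqref{common:gram}, $\|s\|^2=2d-2$ and $s_\ell\in\{0,\pm2\}$, so exactly $(d-1)/2$ coordinates of $s$ are nonzero. Then
\[
 -2m=s\cdot x\ge -n\|s\|_1=-(d-1)n,
\]
so $m\le(d-1)n/2$. Consequently, in the strict upper regime at most one simplex inequality is active anywhere on $P_{m,n}$.

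\emph{Upper regime.} Every vertex is then one of two kinds: a cube vertex, or the intersection of one hyperplane $v_i\cdot x=-m$ with a cube edge, in which case $d-1$ coordinates equal $\pm n$. In the second case the free coordinate is $\pm(m+\sum_{\ell\ne j}\pm n)$. Since $m$ and $(d-1)n$ are odd and even respectively, this coordinate is odd, and it is $\equiv n\pmod 2$. This gives integrality, including at the resonances.

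For smoothness in this regime:
\begin{itemize}
\item A cube vertex with no simplex facet active has normals $\{e_j\}$, which are unimodular.
\item An edge-cut vertex lying in the open edge has normals $\{e_\ell\}_{\ell\ne j}\cup\{v_i\}$. This set has determinant $\pm(v_i)_j=\pm1$, so it is unimodular.
\item The only remaining case is a cube vertex $n\varepsilon$ lying on some hyperplane $v_i\cdot x=-m$. Since $v_i\cdot\varepsilon$ is an odd integer, this occurs precisely when $m/n\in2\Z+1$. Such a vertex has $d+1$ active facets, so it is not simple and smoothness fails.
\end{itemize}
I also need that every inequality active at the vertices considered is actually facet-defining. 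This is routine: a cube facet is a facet because the cut $v_i\cdot x\ge-m$ removes only a corner region, given $m>n$. A simplex cut is a facet whenever it meets the open cube, which holds since $m<dn$.

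\emph{Lower regime $m\le(d-1)n/2$.} Here I must produce a vertex at which two simplex facets are active. By the determinant remark above, such a vertex is either non-simple or has $|\det M|\ge2$, so $P_{m,n}$ is not smooth.

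To find one, fix $i\ne j$ and let $S$ be the support of $s$. Set $x_\ell=-n\,s_\ell/2$ on $S$, which gives $v_i\cdot x=v_j\cdot x=-(d-1)n/2$ on $S$. Then adjust the coordinates off $S$, where $v_i=-v_j$. The cleanest choice is to shrink the $S$-part towards $0$ by the factor $2m/((d-1)n)\le1$ and keep the off-$S$ coordinates at $0$. This gives a point with $v_i\cdot x=v_j\cdot x=-m$ inside the cube.

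For any other row $k$, the Gram relations give $v_k\cdot s=-2$. Hence the $S$-part of $v_k$ contributes $v_k\cdot x=2m/(d-1)\cdot(\text{positive})\ge-m$. This positivity is the step I expect to need the most care: it requires that $v_k\cdot x$ is computed only from $S$ and that the sign works out from $v_k\cdot s=-2$. Granting it, the face $\{v_i\cdot x=v_j\cdot x=-m\}\cap P_{m,n}$ is nonempty, and any vertex of that face is a vertex of $P_{m,n}$ with both facets $i$ and $j$ active. This completes the converse.
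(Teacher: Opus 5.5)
Your proposal is correct and follows essentially the paper's proof: the averaging bound for two tight simplex facets, the cube-vertex/edge-cut classification with its parity argument, non-simplicity of a resonant cube vertex, and in the lower regime exactly the paper's point $z=-\frac{m}{d-1}(v_i+v_j)$, which your shrinking construction produces; the step you flagged is immediate, since $v_k\cdot z=-\frac{m}{d-1}\,v_k\cdot s=\frac{2m}{d-1}>-m$. Three details need tightening: Lemma~\ref{base:small-sign-basis} is stated only for order at most four and you do not bound the number of active simplex facets in the lower regime, so argue instead that any two $\pm1$ rows agree modulo~$2$, which makes $\det M$ even; apply your averaging bound to a second inequality that is tight \emph{or violated} at a cube point, because this is what places the resonant cube vertex $n\varepsilon$ in $P_{m,n}$; and certify each simplex facet with the witness point $-(m/d)v_r$, since meeting the open cube alone does not suffice when $m\le(d-1)n/2$.
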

\begin{proof}
All displayed inequalities are genuine facets. The point $-(m/d)v_r$ is in the relative interior of simplex facet $r$ and strictly inside the cube and other simplex inequalities. The point $\pm ne_j$ similarly witnesses each cube facet, since $m>n$.

Distinct Hadamard vertices $v,w$ agree in exactly $(d-1)/2$ coordinates. If both corresponding facets are tight at a point of $[-n,n]^d$, then
\[
 -m=\tfrac12(v+w)\cdot x\ge-\tfrac{d-1}{2}n.
\]
Thus no two simplex facets meet in the upper regime. Every vertex is consequently a cube vertex or the intersection of one simplex facet with a cube edge. In the edge case the last free coordinate is $\pm m$ plus a signed sum of $d-1$ copies of $n$, so it is odd. This proves lattice integrality.

A simplex facet passes through a cube vertex only when $m/n$ is an odd integer. Away from those ratios, a cube vertex has its $d$ cube facets, and an edge-cut vertex has $d-1$ cube facets and one simplex facet. In the chart $x=2y-n\ones$, the primitive normals are signed coordinate vectors and $\pm v_r$. Each such matrix has determinant $\pm1$, proving smoothness.

At a resonance $m/n=d-2j$ in the upper regime, one has $1\le j<(d+1)/4$. By coordinate sign changes, take one normal to be $\ones$, and take a cube vertex with $j$ positive and $d-j$ negative coordinates. It lies on that simplex facet. Every other simplex inequality is strict: a tight or violated second inequality would contradict the preceding averaging bound. In nonnegative coordinates $u$ for inward cube directions, its tangent cone is
\[
 u_i\ge0,\qquad
 \sum_{i:\,x_i=-n}u_i-\sum_{i:\,x_i=n}u_i\ge0.
\]
All $d+1$ inequalities are irredundant. For a coordinate equality, make the other coordinates positive with sufficiently large total on the negative-coordinate side; this remains possible after deleting one such coordinate because $d-j\ge2$. For the last equality, balance positive totals on both sides. The vertex is not simple.

Finally suppose $m\le(d-1)n/2$. For two distinct normals $v,w$, the point
\[
 z=-\frac{m}{d-1}(v+w)
\]
lies in the cube and on both simplex facets. Every other normal $u$ satisfies $u\cdot z=2m/(d-1)>-m$. Their nonempty bounded intersection face contains a vertex. If it has more than $d$ incident facets it is not simple. Otherwise its primitive-normal matrix contains two $\pm1$ rows, which are identical modulo~$2$; its determinant cannot be $\pm1$. Smoothness is impossible.
For $d=3$, the lower regime is absent and no odd integer lies strictly between $1$ and $3$. For $d=1$ there are no admissible parameters.
\end{proof}

\begin{remark}
Santos states the sufficient smoothness bound $m>(3d-1)n/4$
in the original report~\cite[p.~2308]{Santos}. That statement needs
a resonance qualification, supplied by Theorem~\ref{base:smoothness}. For every Hadamard dimension $d>7$, the admissible choice $m=(d-2)n$ satisfies that strict bound but is nonsmooth by the theorem. For example $d=11,m=9,n=1$ has such a vertex.
\end{remark}

\section{Corner cuts and normality}\label{common:corners}
\begin{theorem}[Strictly separated corner cuts]\label{common:strict-cuts}
Let $B=\prod_{j=1}^d[0,\ell_j]$ have positive integer side lengths. Let $\mathcal C$ be a finite set of distinct corners, with nonnegative integer thresholds $t_c$. Suppose
\[
 \|c-c'\|_1>t_c+t_{c'}\quad(c\ne c').
\]
If the polytope
\[
 Q=\{x\in B:L_c(x)\ge t_c\text{ for all }c\in\mathcal C\},
 \qquad L_c(x)=\sum_j|x_j-c_j|,
\]
is nonempty, it is integral and has IDP.
\end{theorem}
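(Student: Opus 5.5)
The plan has two parts: integrality from a local analysis at each vertex, and IDP by peeling off one lattice point at a time. First, each $L_c$ is affine on $B$. Since $c_j\in\{0,\ell_j\}$, we have $|x_j-c_j|=\sigma_{c,j}(x_j-c_j)$ with $\sigma_{c,j}=\pm1$, so $L_c(x)=\sigma_c\cdot x+\mathrm{const}_c$ with $\sigma_c\in\{\pm1\}^d$ and an integer constant. Second, the triangle inequality gives $L_c(x)+L_{c'}(x)\ge\|c-c'\|_1>t_c+t_{c'}$ for every $x$. Hence at no point of $Q$ are two cut constraints tight. Since the thresholds and $\|c-c'\|_1$ are integers, this also gives the integer slack $\|c-c'\|_1\ge t_c+t_{c'}+1$, which I will use in the IDP step.

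\emph{Integrality.} At a vertex of $Q$ either $d$ box facets are tight, giving a box vertex, or exactly one cut $L_c=t_c$ is tight together with $d-1$ box facets. In the second case the $d-1$ fixed coordinates are integers $0$ or $\ell_j$. The remaining coordinate is then determined by an equation with coefficient $\pm1$ and integer right-hand side, so it is an integer. The same count shows that the normals at any vertex are $d-1$ coordinate vectors and one $\pm1$ vector, a unimodular system. That is not needed for the statement, but it is consistent with the later use in Corollary~\ref{common:upper-normality}.

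\emph{IDP.} Let $Z\in kQ\cap\Z^d$. It suffices to find $x\in Q\cap\Z^d$ with $Z-x\in(k-1)Q$ and then induct on $k$. Reflect coordinates so that all relevant inequalities read as box bounds $0\le Z_j\le k\ell_j$ together with $\sigma_c\cdot Z+k\,\mathrm{const}_c\ge kt_c$. First take the ``balanced'' candidates: coordinates $x_j\in\{\lfloor Z_j/k\rfloor,\lceil Z_j/k\rceil\}$. Any such choice keeps both $x$ and $Z-x$ in the dilated boxes, by the standard one-dimensional argument. The freedom that remains is the choice of the subset $S$ of coordinates with $Z_j/k\notin\Z$ that are rounded up. Next I would show that at most one corner $c$ is \emph{critical}, meaning that $L_c$ can be violated by some balanced choice, either for $x$ or for the residual $(Z-x)/(k-1)$. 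The argument is that $L_c(Z/k)+L_{c'}(Z/k)\ge\|c-c'\|_1\ge t_c+t_{c'}+1$. For the unique critical corner $c$, I order $S$ according to $\sigma_c$: round up coordinates with $\sigma_{c,j}=+1$ first if $x$ is deficient in $L_c$, and the opposite ones first if the residual is deficient. This is a one-functional rounding. Because $\sigma_c\cdot x$ then moves in unit steps as $S$ varies, the integer value $L_c(x)$ can be placed in the window $[t_c,\;L_{kc}(Z)-(k-1)t_c]$, whose length is nonnegative since $L_{kc}(Z)\ge kt_c$.

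The main obstacle is the claim that only one corner is critical: a balanced rounding perturbs $L_{c'}$ by up to the number of fractional coordinates, not by less than one. If this fails, my fallback is a more careful argument. Take any $x$ that fixes corner $c$, and suppose $L_{c'}(x)<t_{c'}$. Then $L_c(x)>\|c-c'\|_1-t_{c'}\ge t_c+1$, so $x$ has slack in $L_c$. That slack lets me swap the rounding of a coordinate on which $\sigma_c$ and $\sigma_{c'}$ disagree, increasing $L_{c'}$ while keeping $L_c\ge t_c$. I would formalize this as an exchange or potential argument, essentially a matroid-type augmentation on the set $S$, and check that it terminates without breaking the residual conditions.
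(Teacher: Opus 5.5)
Your integrality argument is correct and is the same as the paper's. The IDP part has a genuine gap, at exactly the step you flag.

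\textbf{Why the critical-corner claim fails.} The inequality $L_c(Z/k)+L_{c'}(Z/k)\ge t_c+t_{c'}+1$ only says that the two slacks at $Z/k$ sum to at least one. A balanced rounding can move $L_{c'}$ by almost the number of fractional coordinates, so several corners can be critical at once. Here is a concrete case.
\begin{itemize}
\item Take $B=[0,2]^3$, $c=(0,0,0)$ with $t_c=1$, and $c'=(2,2,2)$ with $t_{c'}=4$. Separation holds, since $6>5$, and $Q=\{x\in B:1\le x_1+x_2+x_3\le 2\}$.
\item Let $k=2$ and $Z=(1,1,1)$, so the balanced candidates are $\{0,1\}^3$.
\item The candidate $(0,0,0)$ violates the $c$-cut, and $(1,1,1)$ violates the $c'$-cut.
\end{itemize}
So both corners are critical in your sense. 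The instance is still solvable, with $x=(1,0,0)$, so the balanced strategy is not refuted, but your argument does not establish it.

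\textbf{What is left unproved.} Rounding along $\sigma_c$ for one corner does not finish the proof. You must place every $L_{c'}(x)$ in its window $[t_{c'},\,kL_{c'}(Z/k)-(k-1)t_{c'}]$ simultaneously, which is a discrepancy-type problem for the sign vectors $\sigma_{c'}$ on the fractional coordinates. Your fallback swap is only sketched and explicitly leaves the residual constraints unchecked. Because you insist on balanced candidates, swaps are confined to fractional coordinates. A fractional coordinate of $J=\{j:c_j\ne c'_j\}$ rounded toward $c'$ need not exist. A swap outside $J$ also raises $L_c(x)$, which can push it past the upper end of its window and break $Z-x\in(k-1)Q$.

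\textbf{How the paper proceeds.} The paper never peels off a point, so no residual in a different dilate appears.
\begin{itemize}
\item It splits $Z$ into $k$ arbitrary integer box points $p_1,\dots,p_k$, not necessarily balanced.
\item It uses the potential $\Phi=\sum_i\sum_c\max\{0,t_c-L_c(p_i)\}$.
\item It repeatedly moves one unit in a single coordinate from a point $b$ with $L_c(b)>t_c$, which exists by averaging, to a point $a$ deficient for $c$.
\item If $b$ has a tight or violated cut $c'$, then $\sum_{j\in J}\bigl(|b_j-c_j|-|a_j-c_j|\bigr)\ge\|c-c'\|_1-L_{c'}(b)-L_c(a)>0$. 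This gives a coordinate in which moving $b$ toward $c$ also moves it away from $c'$.
\item Your own integer-slack observation then protects every other cut, and $\Phi$ drops strictly.
\end{itemize}
The missing ingredient is this symmetric exchange among all $k$ summands, driven by a decreasing potential.
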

\begin{proof}
Each $L_c$ is affine on $B$. The triangle inequality implies that at any point of $B$ at most one cut is tight or violated. Therefore every vertex of $Q$ is a box vertex or an edge intersection with one cut. Since the free-coordinate coefficient is $\pm1$, these vertices are integral.

Fix an integer $k\ge1$ and $z\in kQ\cap\Z^d$. Split each coordinate of $z$ into $k$ integers between $0$ and its box bound, obtaining integer box points $p_1,\ldots,p_k$ summing to $z$. Affinity gives
\[
 \sum_iL_c(p_i)=kL_c(z/k)\ge kt_c.
\]
Consider the nonnegative integer potential
\[
 \Phi=\sum_i\sum_{c\in\mathcal C}\max\{0,t_c-L_c(p_i)\}.
\]
If it is positive, choose a deficient point $a$ for a corner $c$. The averaging inequality supplies another point $b$ with $L_c(b)>t_c$.

If every cut at $b$ is strictly satisfied, choose a coordinate $j$ with
$|b_j-c_j|>|a_j-c_j|$, which exists because $L_c(b)>L_c(a)$.
Otherwise let $c'\ne c$ be the unique tight or violated cut at $b$, and put $J=\{j:c_j\ne c'_j\}$. Opposite endpoint distances yield
\begin{align*}
 \sum_{j\in J}\bigl(|b_j-c_j|-|a_j-c_j|\bigr)
 &\ge\|c-c'\|_1-L_{c'}(b)-L_c(a)\\
 &>\|c-c'\|_1-t_{c'}-t_c>0.
\end{align*}
Choose such a coordinate $j\in J$ with positive summand.

Transfer one unit from $b$ to $a$ away from $c$ in coordinate $j$. The strict integer coordinate-distance inequality ensures both points stay in the box. Their sum is preserved. The deficit at $a$ decreases by one. Every other cut at $a$ had integer slack at least one, so none becomes violated. In the first case the same holds for every cut at $b$. In the second case the move takes $b$ away from $c'$, increasing its distance, while every other cut at $b$ had slack at least one. Thus $\Phi$ decreases strictly. Repetition terminates with all summands in $Q$, proving IDP.
\end{proof}

\begin{corollary}\label{common:upper-normality}
Every admissible Hadamard truncation with $m>(d-1)n/2$ is normal. In particular, every smooth member of the source family is normal.
\end{corollary}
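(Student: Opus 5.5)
The plan is to put the integer chart $Q_{m,n}$ into the form of Theorem~\ref{common:strict-cuts} and then quote that theorem. In the chart, the box is $B=[0,n]^d$ with $\ell_j=n$.

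Fix a simplex row $v_i$. The inequality $v_i\cdot y\ge b_i$ is a corner cut at the corner $c^{(i)}$ defined by $c^{(i)}_j=0$ when $(v_i)_j=-1$ and $c^{(i)}_j=n$ when $(v_i)_j=+1$. Indeed, on $B$ we have $|y_j-c^{(i)}_j|=n-y_j$ when $(v_i)_j=1$ and $=y_j$ when $(v_i)_j=-1$. Hence
\[
 L_{c^{(i)}}(y)=n\,\#\{j:(v_i)_j=1\}-v_i\cdot y .
\]
Writing $p_i=\#\{j:(v_i)_j=1\}$, we have $\sum_j(v_i)_j=2p_i-d$. The constraint $v_i\cdot y\ge b_i$ therefore becomes
\[
 L_{c^{(i)}}(y)\le np_i-b_i=\frac{nd+m}{2}.
\]
This is an upper bound on the distance, so the cut is phrased relative to the wrong corner. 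I will switch to the antipodal corner $\bar c^{(i)}=n\ones-c^{(i)}$. Since $L_{\bar c}(y)=nd-L_c(y)$ on $B$, the constraint is equivalent to
\[
 L_{\bar c^{(i)}}(y)\ge t_i,\qquad t_i=nd-\frac{nd+m}{2}=\frac{nd-m}{2}.
\]
Here $t_i$ is a nonnegative integer: $n$, $m$ and $d$ are odd, so $nd-m$ is even, and $m<dn$ gives $t_i>0$. The corners $\bar c^{(i)}$ are distinct because the rows $v_i$ are distinct. So $Q_{m,n}$ is exactly the polytope $Q$ of Theorem~\ref{common:strict-cuts}, with $\ell_j=n$, corners $\bar c^{(i)}$, and all thresholds equal to $t=(nd-m)/2$.

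Next I verify the separation hypothesis. Two distinct rows agree in $(d-1)/2$ coordinates and disagree in $(d+1)/2$, by the Gram relation \eqref{common:gram}. Hence
\[
 \|\bar c^{(i)}-\bar c^{(j)}\|_1=n\,\frac{d+1}{2}.
\]
The condition $\|\bar c^{(i)}-\bar c^{(j)}\|_1>2t$ reads $n(d+1)/2>nd-m$, that is, $m>(d-1)n/2$, which is precisely the assumed regime. The polytope is nonempty: for instance $y=\frac n2\ones$ lies in it, because the source origin satisfies $v_i\cdot 0=0\ge-m$. Theorem~\ref{common:strict-cuts} then gives integrality and IDP in $\Z^d$.

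It remains to transfer this back to the source family. IDP in the chart is exactly normality in the sense of Section~\ref{common:family}. The degree-$k$ chart $z\mapsto(z+kn\ones)/2$ sends $kP_{m,n}\cap(kn+2\Z)^d$ bijectively onto $kQ_{m,n}\cap\Z^d$, so ambient IDP for $Q_{m,n}$ is the required statement.

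For the second assertion, smooth members satisfy $m>(d-1)n/2$ by Theorem~\ref{base:smoothness}, so they are covered by the first part. The only delicate point is the sign bookkeeping in turning each facet into a lower bound on $L_{\bar c}$ and checking that $t$ is an integer. Once that is right, the separation hypothesis coincides exactly with the upper-regime inequality, so I do not expect any real obstacle.
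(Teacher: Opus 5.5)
Your proof is correct and follows the paper's argument. After the antipodal switch, your corner $\bar c^{(i)}=n\ones-c^{(i)}$ is exactly the paper's $c_v=(n/2)(\ones-v)$, with the same threshold $(dn-m)/2$, the same corner distance $(d+1)n/2$, and the same application of Theorem~\ref{common:strict-cuts} followed by the degree-$k$ chart transfer; your explicit nonemptiness check via the source origin is a small step the paper leaves implicit.
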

\begin{proof}
In the chart $x=2y-n\ones$, the box is $[0,n]^d$. To normal $v$ associate the corner $c_v=(n/2)(\ones-v)$. The simplex inequality becomes
\[
 L_{c_v}(y)\ge t=\frac{dn-m}{2}.
\]
The threshold is integral, and distinct corners have distance $(d+1)n/2$. The strict-separation condition is precisely $m>(d-1)n/2$. Apply Theorem~\ref{common:strict-cuts}. For $X\in kP_{m,n}\cap(kn+2\Z)^d$, put $Y=(X+kn\ones)/2$. A decomposition $Y=\sum_i y_i$ into integer points of the transformed polytope gives $x_i=2y_i-n\ones\in P_{m,n}\cap\Lambda_n$ and $X=\sum_i x_i$. Theorem~\ref{base:smoothness} gives the last assertion.
\end{proof}

\begin{proposition}[An integral nonnormal member]\label{common:nonnormal-example}
For the Sylvester simplex of dimension fifteen, $P_{5,1}$ is integral
in $(1+2\Z)^{15}$ and is not normal.
\end{proposition}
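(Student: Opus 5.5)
Integrality comes from Theorem~\ref{base:integrality}: here $d=15$, $n=1$, $m=5$, and the band condition reads $m\ge (d-5)n/2=5$, so the endpoint case applies (the theorem explicitly includes equality). Thus every vertex of $P_{5,1}$ lies in $(1+2\Z)^{15}$, and in the chart $Q_{5,1}=\{y\in[0,1]^{15}: v_i\cdot y\ge b_i\}$ every vertex is a $0/1$ vector. The lattice points of $Q_{5,1}$ are therefore exactly the $0/1$ vectors satisfying all sixteen inequalities. Rewritten with the corners $c_v=(\ones-v)/2$ as in Corollary~\ref{common:upper-normality}, these are the vectors with $L_{c_v}(y)\ge t=(15-5)/2=5$. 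So $Q\cap\Z^{15}$ is the set of $0/1$ words at Hamming distance at least $5$ from each of the sixteen words $c_v$. For the Sylvester simplex these sixteen words form the simplex code padded to length $15$, that is, the $[15,4]$ simplex code, whose nonzero words have weight $8$ and pairwise distance $8$.

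To refute normality I will exhibit a degree $k=2$ target $Z\in 2Q\cap\Z^{15}$ that is not a sum of two lattice points of $Q$. Each such summand is a $0/1$ vector, so $Z\in\{0,1,2\}^{15}$. A decomposition is then forced on the coordinates where $Z_j\in\{0,2\}$, and on the set $S=\{j:Z_j=1\}$ it amounts to a splitting $S=S_1\sqcup S_2$. The constraint for corner $c$ is $L_c(y_1)+L_c(y_2)\ge 10$ on average, but each $L_c(y_i)$ must individually be at least $5$. The natural candidate puts $Z$ "between" two corners, where the averaged inequality holds with slack but every splitting makes one half too close to some corner. My first attempt is $Z=c+c'$ for two corners at distance $8$, adjusted on a few coordinates. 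Concretely, I would take $c=0$ and $c'$ of weight $8$, and search for $Z$ with $Z_j=1$ on a set $S$ chosen so that every $0/1$ splitting yields a half within distance $4$ of one of the sixteen codewords. I also need $\sum_i L_c$ of the target to be at least $10$ for all $c$; this is the condition $Z/2\in Q$, and it follows from the affine expression $L_c(Z/2)=\sum_j |Z_j/2-c_j|$. Since the ambient group is all of $\Z^{15}$ in the chart, any $Z\in 2Q\cap\Z^{15}$ is a valid target; no parity condition intervenes.

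The main obstacle is finding the explicit $Z$ and certifying the nondecomposability cleanly. My plan is to use the code structure. The simplex code's large symmetry group, $GL_4(\F_2)$ acting on the coordinates, lets me assume $Z$ has a normalized form, and the covering radius of the simplex code is $7$. Deficient words (distance $\le 4$ from the code) are plentiful, which gives room for obstructions. I would choose $Z$ so that $\{j:Z_j=2\}$ is large, forcing both summands to share many ones. For example, $Z_j=2$ on a weight-$4$ set contained in the support of a codeword $c'$ forces each summand near $c'$, and $S$ would be small, say $|S|\le 6$. A case check over the $2^{|S|-1}$ splittings then shows that one half always lies within distance $4$ of $c'$ or of another codeword. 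If hand analysis is messy, I will state the explicit $Z$, verify $Z/2\in Q$ by listing the sixteen values $L_c(Z)$, and justify nondecomposability by the finite enumeration, which is consistent with the paper's Lean-verified style.
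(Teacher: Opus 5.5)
\textbf{Genuine gap: the witness is missing.} Your integrality step matches the paper, and your reformulation is correct. In the chart, $Q_{5,1}\cap\Z^{15}$ is exactly the set of $0/1$ words at Hamming distance at least $5$ from every word $c_r=(r\cdot j)_{j\ne0}$ of the $[15,4]$ simplex code. A degree-two target $Z\in\{0,1,2\}^{15}$ is decomposable if and only if some splitting of $S=\{j:Z_j=1\}$ gives two such words. However, nonnormality is an existence claim, and its entire content is the witness $Z$, which you never produce. The ``case check over the $2^{|S|-1}$ splittings'' is asserted for an unspecified $Z$, so as written the proposal proves nothing beyond integrality.

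\textbf{Closing the gap.} Your suggested shape works, and a correct witness is the paper's one, written in the chart: $Z=(1,1,1,0,2,2,2,0,2,\ldots,2)$. Here $Z/2$ is the center of the cube face with $y_4=y_8=0$ and $y_j=1$ for the other $j\ge4$.
\begin{itemize}
\item Any two summands are forced onto this face and are complementary on $S=\{1,2,3\}$.
\item On the twelve fixed coordinates, the fixed part is at distance at least $4$ from every restricted $c_r$. Hence $L_{c_r}(Z/2)\ge4+\tfrac32$ and $Z/2\in Q$. Equality holds exactly for $r=1,2,3,4,8$.
\item The restrictions of $c_1,c_2,c_3,c_4$ to $S$ are $101,011,110,000$, which are the four even-weight words.
\item Of two complementary words of length three, one has even weight. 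That summand is therefore at distance exactly $4$ from a codeword, which is a contradiction.
\end{itemize}
In source coordinates this is the paper's argument: the face section is the parity tetrahedron $\conv T$, and its lattice points $T$ contain no antipodal pair.

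Translating by the codeword $c_4$ is a coordinate sign change that permutes the simplex normals. It turns the witness into $Z=(1,1,1,2,0,0,0,0,2,2,2,0,0,0,0)$, with twos on $\{4,9,10,11\}\subset\operatorname{supp}c_{12}$ and $|S|=3$. This is precisely the form you proposed to search for, so your search would succeed. The proof, however, must actually exhibit such a $Z$ and carry out both verifications.
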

\begin{proof}
Integrality is the included endpoint of Theorem~\ref{base:integrality}.
Index rows by $0,\ldots,15$ and coordinates by $1,\ldots,15$,
identifying each index with its four-bit vector in $\F_2^4$.
The entries are $(-1)^{r\cdot j}$, with the dot product taken over $\F_2$. Fix $x_4=x_8=-1$, all other coordinates
with index at least four to $+1$, and leave $z=(x_1,x_2,x_3)$ free.
Rows $1,2,3,4$ have fixed contribution $-4$ and free normals
\[
 T=\{(-1,1,-1),(1,-1,-1),(-1,-1,1),(1,1,1)\}.
\]
They impose $t\cdot z\ge-1$ for $t\in T$, defining the parity
tetrahedron $\conv T$. Row $8$ duplicates row $4$ on this face;
all other rows have free normals in $T$, with fixed contribution
$8$ in row $0$, $4$ in rows $13,14,15$, and $0$ otherwise. Thus the cube-face section is
exactly this tetrahedron.

The degree-two point
\[
 h=(0,0,0,-2,2,2,2,-2,2,2,2,2,2,2,2)
\]
lies in $2P_{5,1}\cap(2+2\Z)^{15}$: $h/2$ is the face center.
Any decomposition into two source-lattice points forces both onto
the face, whose only odd-coordinate points are $T$. Their free
coordinates would have to be an antipodal pair, but $T$ has none.
Hence $h$ has no such decomposition.
\end{proof}

\section{Rounding lemmas and normality bands}

A sign vector has entries in $\{-1,1\}$.
The role of three constraints comes from a small parity phenomenon.
A nonintegral vertex of the slab intersection has exactly two
half-integer coordinates. Their four simultaneous roundings remain
in the box, and each positive-width signed slab excludes at most
one choice. Three slabs therefore leave a feasible rounding.
The next two lemmas make this argument precise, and the subsequent
objective estimate controls the error in every unprotected row.

\begin{lemma}\label{common:small-matrix}
An invertible sign matrix of order at most three has half-integral
inverse and maps common-parity integer right-hand sides to integer
solutions. For an invertible matrix of order three, every inverse row
has exactly two nonzero entries, each equal to $\pm1/2$.
\end{lemma}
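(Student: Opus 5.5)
The first two assertions come directly from Lemma~\ref{base:small-sign-basis}. Its final sentences show that every invertible sign matrix of order $k\le3$ has $|\det M|=2^{k-1}$. The lemma then gives two conclusions: $2M^{-1}$ is integral, so the inverse is half-integral, and $M^{-1}q\in\Z^k$ for every vector $q$ with odd entries. For a common-parity integer right-hand side, I will argue as in the last paragraph of the proof of Theorem~\ref{base:integrality}. If the entries are all odd, the lemma applies directly. If they are all even, write $q=2w$, and then $M^{-1}q=2M^{-1}w$ is integral. Equivalently, write $q=c\ones+2w$ and use the integrality of both $M^{-1}\ones$ and $2M^{-1}$.

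For the order-three statement, I will normalize. Row and column sign changes, together with permutations, turn $M$ into $DMD'$ with $D,D'$ signed permutation matrices. Then $(DMD')^{-1}=D'^{-1}M^{-1}D^{-1}$, so the entries of $M^{-1}$ are only permuted and sign-changed, and the conclusion is invariant under these operations. After making the first row and column all ones, the lower $2\times2$ block has entries $\pm1$. Invertibility forces $M$ to have no two equal rows and no two equal columns, and up to sign the matrix is essentially unique. I expect the normal form
\[
 M=\begin{pmatrix}1&1&1\\1&1&-1\\1&-1&1\end{pmatrix},
\]
with $\det M=-4$.

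I will check the normal form as follows. The rows $(1,a,b)$ with $a,b=\pm1$ must be distinct from $(1,1,1)$ and from each other, so they are two of $(1,1,-1)$, $(1,-1,1)$, $(1,-1,-1)$. If $(1,-1,-1)$ occurs, the other row is $(1,1,-1)$ or $(1,-1,1)$, and in either case one column is forced to repeat the first column, which contradicts invertibility. Only the displayed $M$ up to row order remains. Its inverse is computed directly as
\[
 M^{-1}=\frac12\begin{pmatrix}0&1&1\\1&-1&0\\1&0&-1\end{pmatrix},
\]
so every row has exactly two nonzero entries, each $\pm1/2$. This property is preserved under the signed permutations.

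The main obstacle is stating the normal-form reduction cleanly, in particular justifying that the exceptional row $(1,-1,-1)$ cannot occur. An alternative that avoids the case analysis uses the determinant. Since $|\det M|=4$, each entry of $M^{-1}$ is $\pm(\text{$2\times2$ sign minor})/4$. Every such minor lies in $\{0,\pm2\}$, so every entry is $0$ or $\pm1/2$. Each inverse row $r$ satisfies $r\cdot(\text{column } j)=\delta$, so $r$ cannot be zero. Every sum of an odd number of $\pm1/2$'s is an odd multiple of $1/2$, so $r$ must have an even number of nonzero entries, hence exactly two.
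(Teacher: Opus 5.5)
Your first two assertions are handled correctly, and more economically than in the paper. You import Lemma~\ref{base:small-sign-basis}, which already gives $|\det M|=2^{k-1}$ for $k\le3$, integrality of $2M^{-1}$, and integrality of $M^{-1}q$ for odd $q$; you then handle even $q$ by halving. The paper instead recomputes order by order and derives the parity statement from the inverse-row structure: each solution coordinate is $(\pm q_a\pm q_b)/2$.

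Your normal-form route for order three has a false step. The row $(1,-1,-1)$ can occur. Rows $(1,1,1),(1,-1,-1),(1,1,-1)$ give columns $(1,1,1)^{\mathsf T},(1,-1,1)^{\mathsf T},(1,-1,-1)^{\mathsf T}$, none of which repeats, and the determinant is $4$. Rows $(1,1,1),(1,-1,-1),(1,-1,1)$ give determinant $-4$. In fact all three choices of two rows from $(1,1,-1),(1,-1,1),(1,-1,-1)$ are invertible, so fixing the first row and column does not produce a unique form. The other two matrices reduce to yours only after renormalizing at a different row and column; for the first, negate column $3$ and swap rows $1$ and $3$. Your displayed inverse is also off by a row swap: the inverse of your $M$ is $\frac12\begin{pmatrix}0&1&1\\1&0&-1\\1&-1&0\end{pmatrix}$.

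Neither slip changes the conclusion, and your determinant alternative is a complete proof with no case analysis. Adjugate entries are $2\times2$ sign minors in $\{0,\pm2\}$, and $|\det M|=4$, so $M^{-1}$ has entries in $\{0,\pm\tfrac12\}$. Each inverse row paired with a column of $M$ gives an integer, so the row has an even, nonzero number of nonzero entries, hence exactly two. Make this the argument. It also justifies the step the paper compresses into ``the cofactor formula gives the asserted inverse rows.''
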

\begin{proof}
At order one this is immediate. At order two the determinant is
$\pm2$ and the inverse entries are $\pm1/2$. At order three,
normalize the first row and column to ones and subtract the first row
from each other row. Direct evaluation gives determinant $\pm4$;
the cofactor formula gives the asserted inverse rows. In each inverse
row the numerator is a sum or difference of two right-hand sides,
which is even when their parities agree.
\end{proof}

\begin{lemma}[Three-slab feasibility]\label{common:slabs}
Let $B$ be a bounded box with integer endpoints and let $a_i$ be sign
vectors, $1\le i\le r,\ 0\le r\le3$. Suppose the integers $\ell_i$ have a
common parity and the integers $u_i$ have a common parity. If
\[
 R=B\cap\{y:\ell_i\le a_i\cdot y\le u_i,\ 1\le i\le r\}
\]
is nonempty, it contains an integer point. Every nonintegral vertex of $R$ has exactly two noninteger coordinates,
both in $\Z+\tfrac12$. If the lower
and upper endpoint parity classes agree, every vertex is integral.
\end{lemma}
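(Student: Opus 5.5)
Fix a vertex $y$ of $R$ and analyse its active constraints. Active box constraints fix some coordinates at integer endpoints; the remaining $k$ free coordinates $F$ are determined by $k$ tight slab constraints $a_i\cdot y=e_i$, where $e_i\in\{\ell_i,u_i\}$. Two tight constraints cannot come from the same slab: if both $\ell_i$ and $u_i$ were tight, then $\ell_i=u_i$ and the two rows are equal, so only one can be used in a basis. Hence $k\le r\le3$, and the restriction $M$ of the selected $a_i$ to $F$ is an invertible sign matrix of order at most three. The right-hand side is $q_i=e_i-\sum_{j\notin F}(a_i)_jy_j$, and the subtracted fixed part changes all parities by the same amount modulo $2$ only after we control it.

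To make the fixed part harmless, we first translate the box so that one corner is at the origin. Alternatively, we note that $\sum_{j\notin F}(a_i)_jy_j\equiv\sum_{j\notin F}y_j\pmod2$ for every $i$, since sign entries are odd. So subtracting the fixed contributions shifts every $q_i$ by the same parity.

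\emph{Case 1: all $e_i$ have the same parity.} This happens if all are lower endpoints, if all are upper endpoints, or if the two parity classes agree. Then $q$ has common parity, and Lemma~\ref{common:small-matrix} gives $y_F=M^{-1}q\in\Z^k$. This proves the last assertion, and it proves integrality whenever the tight set is mixed but the parities agree.

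\emph{Case 2: mixed parities.} Then $k\ge2$, and $q$ has parity pattern $(p,p,p')$ or $(p,p')$ with $p\ne p'$. By Lemma~\ref{common:small-matrix} the inverse is half-integral. At order two, $y_F=M^{-1}q$ has entries $(\pm q_1\pm q_2)/2$, and both are half-integers since $q_1+q_2$ is odd. At order three, each inverse row has exactly two entries $\pm\tfrac12$. Write $q=c\ones+\delta e_t$ with $\delta$ odd and $t$ the odd-one-out index. Then $M^{-1}c\ones$ is integral, and $\delta M^{-1}e_t$ is half-integral precisely in the rows whose nonzero pair includes column $t$. Column $t$ of $M^{-1}$ has exactly two nonzero entries: $M^{-1}$ has six nonzero entries in total, and we must check that each column carries two. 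This follows by applying the row statement of Lemma~\ref{common:small-matrix} to $M^{\mathsf T}$, whose inverse is $(M^{-1})^{\mathsf T}$. So exactly two coordinates are in $\Z+\tfrac12$. This gives the vertex description.

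For existence of an integer point, take any vertex $y$ of the nonempty polytope $R$. If it is integral we are done. Otherwise let $j_1,j_2$ be its two half-integer coordinates, and consider the four roundings $y+\tfrac12(\epsilon_1e_{j_1}+\epsilon_2e_{j_2})$ with $\epsilon_1,\epsilon_2\in\{\pm1\}$. Box endpoints are integers, so each rounding stays in $B$. For slab $i$, the value $a_i\cdot y$ moves by $\tfrac12(\pm\epsilon_1\pm\epsilon_2)\in\{-1,0,1\}$. The two roundings with change $0$ are always feasible for this slab. Of the remaining two, one moves the value up by $1$ and the other down by $1$. Since $a_i\cdot y$ is an integer, both lie in the slab unless $a_i\cdot y$ sits at an endpoint. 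When the slab has positive width, at most one of them is lost.

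The main obstacle is to get the count right: each slab must forbid at most one of the four roundings. A zero-width slab, with $\ell_i=u_i$ tight, would forbid two roundings. In that case, however, its equality forces matching parities, and we must show it is then harmless. My plan there is to note that if some slab has $\ell_i=u_i$, then the parity classes of $\ell$ and $u$ coincide, and the final assertion already makes every vertex integral. Otherwise every slab has positive width, each of the at most three slabs excludes at most one of the four roundings, and at least one rounding survives. That rounding is an integer point of $R$.
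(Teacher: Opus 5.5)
Your proof is correct and follows the paper's argument: at most three free coordinates at a vertex, Lemma~\ref{common:small-matrix} for the common-parity case, and the four roundings of the two half-integer coordinates, where each positive-width slab excludes at most one rounding. The only variation is that you get ``exactly two half-integer coordinates'' from the column structure of $M^{-1}$ (via $M^{\mathsf T}$), where the paper uses the shorter parity count that an odd number of half-integer free coordinates would make every selected signed row value nonintegral; one small repair is that $q$ need not have the form $c\ones+\delta e_t$, so write it as the common-parity vector $q-e_t$ plus $e_t$, which gives the same conclusion.
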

\begin{proof}
For $r=0$, the intersection is the integer box itself, and the assertions are immediate.
For $r>0$, at a vertex at most three coordinates are strictly between their box
bounds: active slab rows must span the coordinates not fixed by active
box constraints. Select an independent set of the corresponding
restricted rows. Lemma~\ref{common:small-matrix} makes their solution
half-integral. An odd number of noninteger free coordinates would
give a noninteger value in each selected signed row equation, so a
nonintegral vertex has exactly two such coordinates. If all endpoints
share one parity, subtracting the fixed-coordinate contributions
preserves common parity, and the same lemma makes the vertex integral.

Otherwise every slab width is odd and, by nonemptiness, positive.
Round the two noninteger coordinates independently in the four
possible ways. Every resulting point is integral and remains in the
box. Each slab value was an integer and changes by $-1,0,0,1$.
A positive-width integer slab excludes at most one rounding. At most
three slabs cannot exclude all four. The argument also covers
lower-dimensional intersections, since an independent active subset
still has full rank in the free coordinates.
\end{proof}

Write $B(a)=\prod_j[\lfloor a_j\rfloor,\lceil a_j\rceil]$.

\begin{lemma}[Rounding estimate]\label{common:rounding}
Suppose a three-slab system as in Lemma~\ref{common:slabs}, restricted to
$B(a)$, contains $a\in\R^d$. There is a feasible integer $p$ with
$\|p-a\|_1\le t/2+1/4$, where $t$ is the number of noninteger coordinates of $a$.
\end{lemma}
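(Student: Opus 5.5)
Consider the polytope $R_a$, the three-slab system intersected with $B(a)$. It is nonempty because it contains $a$, and it is a box with integer endpoints, so Lemma~\ref{common:slabs} applies. Coordinates where $a$ is an integer are fixed in $B(a)$, so only the $t$ noninteger coordinates vary. On each such coordinate the box has width one. Hence every integer point $p$ of $R_a$ satisfies
\[
 \|p-a\|_1=\sum_{j:\,a_j\notin\Z}|p_j-a_j|,
\]
and each summand equals either $a_j-\lfloor a_j\rfloor$ or $\lceil a_j\rceil-a_j$.

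The plan is to minimize the linear objective
\[
 f(y)=\sum_{j}\bigl((1-2f_j)y_j\bigr)+\text{const},\qquad f_j=a_j-\lfloor a_j\rfloor .
\]
On $B(a)$ this function coincides with $\|y-a\|_1$, since on each unit interval the distance is affine in $y_j$. Take a vertex $v$ of $R_a$ minimizing $f$. Then $f(v)\le f(a)=0$ in the appropriate normalization, or more directly $\|v-a\|_1\le\|a-a\|_1=0$, which is useless, so the objective must be set up differently.

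The better route is to start from the crude bound and improve it. Any point of $B(a)$ satisfies $\|y-a\|_1\le\sum_j\max(f_j,1-f_j)$, which may approach $t$. So instead choose $v$ to be a vertex of $R_a$ minimizing $g(y)=\|y-a\|_1$. Since $g$ is affine on $B(a)$, a minimizing vertex exists and $g(v)\le g(a)=0$. This forces $v=a$, so $a$ would already be a vertex, which is again wrong in general. The real idea is an averaging argument. View $a$ as a convex combination of vertices of $R_a$. Then
\[
 \sum_\lambda \lambda_w\|w-a\|_1=\sum_j\sum_w\lambda_w|w_j-a_j|.
\]
For each free coordinate, the inner sum equals $2f_j(1-f_j)\le1/2$, because $w_j$ lies at the endpoints with mean $a_j$. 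This holds as long as every vertex takes endpoint values on coordinate $j$. By Lemma~\ref{common:slabs}, a vertex takes a non-endpoint value (namely $\lfloor a_j\rfloor+1/2$) on at most two coordinates, and then $|w_j-a_j|\le1/2$ as well. The bound on the average therefore still needs care, and it gives some vertex $w$ with $\|w-a\|_1\le t/2$.

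If $w$ is integral, stop. Otherwise $w$ has exactly two half-integer coordinates, and the proof of Lemma~\ref{common:slabs} shows that at least one of the four roundings is feasible. Rounding two half-integer coordinates changes the distance to $a$ by at most $1/2+1/2$ in the worst case. The sharper claim is that among the feasible roundings of those two coordinates, one costs at most $1/4$ extra. This follows because the slabs exclude at most three of the four roundings, and the two coordinate distances $|w_j\pm1/2-a_j|$ average to the half-integer distance plus a correction.

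The main obstacle is getting the constant exactly $t/2+1/4$. I expect to refine the averaging so that it produces a vertex with $\|w-a\|_1\le t/2-$(a slack on its two half-integer coordinates), via the exact identity $\sum_w\lambda_w|w_j-a_j|=2f_j(1-f_j)$, adjusted for half-values. I would then do the case analysis on which roundings are excluded, using the $-1,0,0,1$ slab-change pattern to pair the feasible rounding with a favourable one.
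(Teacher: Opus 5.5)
\paragraph{Verdict.} There is a genuine gap, and it sits at the step you yourself call the main obstacle.

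\paragraph{The rounding step fails.} You claim that some feasible rounding $p$ of the nonintegral vertex $w$ satisfies $\|p-a\|_1\le\|w-a\|_1+1/4$. This is false. Lemma~\ref{common:slabs} guarantees only that \emph{one} of the four roundings survives, so no case analysis on excluded roundings can pick a favourable one. Moreover, in true distance a rounding can cost a full unit. For instance, take $B=[0,1]^2$ with slabs $1\le y_1+y_2\le2$ and $-1\le y_1-y_2\le0$ (lower endpoints odd, upper endpoints even), and $a=(1/2,1/2)$. Here $a$ is itself a vertex, so your averaging returns $w=a$ with $\|w-a\|_1=0$. Yet the only feasible integer points, $(0,1)$ and $(1,1)$, are at distance $1$. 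The lemma's bound $5/4$ still holds, but your accounting does not: once $w$ is controlled only through its true distance, rounding two coordinates can add $1$, which gives only $t/2+1$. Your earlier assertions that $\|y-a\|_1$ is affine on $B(a)$ are also false, because the distance has a kink at $a_j$ inside each unit interval. That error is what made the linear objective look useless to you.

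\paragraph{The missing idea.} Keep the linear objective you discarded, but normalize it as the chord of the distance: $G(y)=\sum_j\bigl(f_j+(1-2f_j)(y_j-\lfloor a_j\rfloor)\bigr)$, summed over the $t$ noninteger coordinates. It has three useful properties:
\begin{itemize}
\item it equals $\|y-a\|_1$ at integer points of $B(a)$;
\item it dominates $\|y-a\|_1$ elsewhere on $B(a)$, by convexity;
\item it satisfies $G(a)=\sum_j2f_j(1-f_j)\le t/2$, not $0$.
\end{itemize}
A vertex $w$ of $R_a$ minimizing $G$ has $G(w)\le G(a)$; equivalently, apply your averaging to $G$ instead of to the distance. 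If $w$ has half-integer coordinates $j,l$, then \emph{every} feasible rounding changes $G$ by at most $(|1-2f_j|+|1-2f_l|)/2$, so no choice among roundings is needed.

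\paragraph{Where the constant comes from.} With $u=|1-2f|$, each of the two coordinates satisfies $2f(1-f)+|1-2f|/2=(1-u^2+u)/2\le5/8$. The rounding penalty is large only when the budget term $2f(1-f)$ is small. The remaining $t-2$ coordinates contribute at most $1/2$ each. Summing gives $(t-2)/2+2\cdot5/8=t/2+1/4$, which is exactly the paper's argument.
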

\begin{proof}
Translate the $t$ nontrivial rounding intervals to $[0,1]$, with center
coordinates $f_i\in(0,1)$, and minimize
\[
 G(y)=\sum_i\bigl(f_i+(1-2f_i)y_i\bigr).
\]
At integer points this equals distance from the center.
For an integral optimal vertex, the error is at most
$G(f)=\sum_i2f_i(1-f_i)\le t/2$. Otherwise let $j,l$ be the two
noninteger coordinates of an optimal vertex, both in $\Z+\tfrac12$. Feasible rounding gives
\begin{equation}\label{common:weighted-cost}
 G(p)\le G(f)+\frac{|1-2f_j|+|1-2f_l|}{2}.
\end{equation}
For either selected coordinate, setting $u=|1-2f_i|$ gives
\[
 2f_i(1-f_i)+\frac{|1-2f_i|}{2}
 =\frac{1-u^2+u}{2}\le\frac58.
\]
Every other coordinate contributes at most $1/2$. Thus the total
is at most $(t-2)/2+2(5/8)=t/2+1/4$. This pointwise estimate is valid
for whichever two indices occur at the optimal vertex; no independence
assumption is used. At $t=0$ take $p=a$, and at $t=1$ the optimal
vertex is integral.
\end{proof}

\begin{theorem}[Direct extraction]\label{common:extraction}
Let $d\ge2$, let $n$ be a positive integer, and let
\[
 Q=\{y\in[0,n]^d:a_i\cdot y\ge b_i\ \forall i\},
\]
where the $a_i$ are sign vectors and the integer $b_i$ have a common
parity. Put $C=\lfloor d/2\rfloor$. If at every box point at most
three rows have slack below $C$, then $Q$ is integral and has IDP.
\end{theorem}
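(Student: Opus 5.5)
The plan is to prove integrality from Lemma~\ref{common:small-matrix}, and IDP by peeling off one lattice point at a time using Lemma~\ref{common:slabs} and Lemma~\ref{common:rounding}.

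\emph{Integrality.} Let $y$ be a vertex of $Q$. Every active simplex-type row has slack $0<C$, since $d\ge2$ gives $C\ge1$. By hypothesis, at most three rows are active at the box point $y$. As in the proof of Lemma~\ref{common:slabs}, choose an independent subset of the active rows, restricted to the free coordinates. This is an invertible sign matrix of order at most three. The fixed coordinates are $0$ or $n$, so subtracting their contributions from the $b_i$ preserves common parity. Lemma~\ref{common:small-matrix} then makes the free coordinates integral. Equivalently, this is the case of Lemma~\ref{common:slabs} where every endpoint has the same parity class.

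\emph{IDP.} I will use induction on $k$; the case $k=1$ is integrality. Let $z\in kQ\cap\Z^d$ with $k\ge2$, and put $a=z/k\in Q$. I look for an integer $p\in Q$ with $z-p\in(k-1)Q$. I restrict $p$ to $B(a)$. Since $0\le z_j\le kn$, every $p_j\in[\lfloor z_j/k\rfloor,\lceil z_j/k\rceil]$ gives $p_j\in[0,n]$ and $z_j-p_j\in[0,(k-1)n]$, so the box conditions for $p$ and for $z-p$ hold automatically. Each row then becomes the slab
\[
 b_i\le a_i\cdot p\le a_i\cdot z-(k-1)b_i .
\]
The point $a$ satisfies all of these slabs. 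The lower endpoints have common parity by hypothesis. The upper endpoints also have common parity: $a_i\cdot z\equiv\sum_j z_j\pmod 2$ for every sign vector $a_i$, and $(k-1)b_i$ has common parity.

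I \emph{protect} only the rows whose slack $s_i=a_i\cdot a-b_i$ is below $C$. Since $a$ is a box point, there are at most three such rows. Applying Lemma~\ref{common:rounding} to the protected slabs inside $B(a)$ yields an integer $p$ that satisfies them, with $\|p-a\|_1\le t/2+1/4\le d/2+1/4$.

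For an unprotected row, $s_i\ge C\ge (d-1)/2$. First,
\[
 a_i\cdot p-b_i\ge s_i-\|p-a\|_1\ge C-d/2-1/4>-1 .
\]
This quantity is an integer, so it is $\ge0$. Second,
\[
 a_i\cdot(z-p)-(k-1)b_i=(k-1)s_i-a_i\cdot(p-a)\ge (k-1)C-d/2-1/4>-1 ,
\]
using $k\ge2$. Since $a_i\cdot(z-p)-(k-1)b_i$ is also an integer, it is $\ge0$. Hence $p\in Q\cap\Z^d$ and $z-p\in(k-1)Q\cap\Z^d$, and the induction closes.

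\emph{Main obstacle.} The delicate step is making the constants line up. The rounding error $d/2+1/4$ has to be beaten by the slack threshold $C=\lfloor d/2\rfloor$ only after integrality of the row values is invoked; for odd $d$ the margin is exactly $-3/4>-1$. In addition, the parity bookkeeping for the upper endpoints must hold so that Lemma~\ref{common:slabs}, through Lemma~\ref{common:rounding}, applies to the protected rows.
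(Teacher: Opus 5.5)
\textbf{Verdict.} Your proof is correct, and its IDP part is essentially the paper's argument. The only real difference is how you prove integrality.

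\textbf{IDP step (same as the paper).} You use the same protected slabs $b_i\le a_i\cdot p\le a_i\cdot Z-(k-1)b_i$ inside $B(Z/k)$. You check parity of the upper endpoints the same way, via $a_i\cdot Z\equiv\sum_jZ_j\pmod 2$. You invoke Lemma~\ref{common:rounding} for the error bound $d/2+1/4$. Finally, you use integrality of the slacks of the unprotected rows to absorb the margin of $-3/4$ for odd $d$.

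\textbf{Integrality (different route).} The paper never analyzes vertices directly. It proves IDP first; the $k=1$ case is trivial, since a lattice point of $Q$ is its own one-term sum. It then takes a rational vertex $v$ with denominator $D$ and decomposes $Dv\in DQ\cap\Z^d$ into $D$ lattice points of $Q$ whose average is $v$. Extremality of $v$ forces all summands to equal $v$, so $v$ is integral.

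You argue directly instead:
\begin{itemize}
\item An active row has slack $0<C$, so at most three rows are active at a vertex.
\item Hence at most three coordinates lie strictly inside $(0,n)$.
\item Lemma~\ref{common:small-matrix}, together with your parity bookkeeping for fixed coordinates in $\{0,n\}$, makes those coordinates integral.
\end{itemize}
This is the ``active-row parity'' proof that the paper attributes to its Lean companion.

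\textbf{Trade-off.} The paper's route is shorter and reuses the extraction machinery. Yours is independent of IDP and also shows the vertex structure of $Q$.

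\textbf{Small correction.} You write that ``the case $k=1$ is integrality.'' That is a mislabel: the base case of the induction is trivial and needs no integrality. This is exactly why the paper can derive integrality afterwards from IDP without circularity.
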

\begin{proof}
For $Z\in kQ\cap\Z^d$, $k\ge2$, put $\xi=Z/k$ and protect the at most
three rows whose slack at $\xi$ is below $C$. In $B(\xi)$ impose
\[
 b_i\le a_i\cdot p\le a_i\cdot Z-(k-1)b_i.
\]
These slabs contain $\xi$. The lower endpoints have common parity, as
do the upper endpoints because
$a_i\cdot Z\equiv\sum_jZ_j\pmod2$. Their integer rounding box lies
inside both factor boxes. Lemma~\ref{common:rounding} gives an integer
$p$ with row error at most $d/2+1/4<C+1$.
An omitted row starts with lower slack at least $C$ and upper slack
at least $(k-1)C$. Its final integer slacks are greater than $-1$,
hence nonnegative. Thus $p\in Q$ and $Z-p\in(k-1)Q$.
Induction gives IDP in every degree. A denominator multiple of a
rational vertex decomposes into integer points having that vertex
as average; extremality forces all summands to equal it. This also
proves integrality.
\end{proof}

\subsection{Quartet bounds}
\begin{lemma}\label{common:general-quartet}
For four distinct simplex rows in any marked Hadamard realization,
$\|v_1+v_2+v_3+v_4\|_1\le2(d-3)$.
\end{lemma}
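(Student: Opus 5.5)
Let $s=v_1+v_2+v_3+v_4$. I would bound $\|s\|_1$ with the same square-sum device used in the proof of Theorem~\ref{base:integrality}. By~\eqref{common:gram} the rows satisfy $v_i\cdot v_i=d$ and $v_i\cdot v_j=-1$ for $i\ne j$, so
\[
 \sum_j s_j^2=4d-12=4(d-3).
\]
Each $s_j$ is a sum of four signs, so $s_j\in\{0,\pm2,\pm4\}$. On this set $2|s_j|\le s_j^2$, with equality exactly when $s_j\in\{0,\pm2\}$. Summing gives $2\|s\|_1\le\sum_j s_j^2=4(d-3)$, which is the claimed bound $\|s\|_1\le2(d-3)$.

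No case analysis is needed. The only input is the Gram identity, which holds in every marked Hadamard realization, together with the fact that a sum of four $\pm1$ entries is even with absolute value at most four. In particular the argument does not depend on which four rows are chosen or on the order $N$.

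The one point to check is that the elementwise inequality $2|u|\le u^2$ holds for every even integer $u$. It does: it is equivalent to $|u|(|u|-2)\ge0$, which holds for $u=0$ and for all $|u|\ge2$. This step carries the whole argument, so I do not expect any real obstacle.

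The same computation also gives a refinement. Each coordinate with $|s_j|=4$ improves the inequality by $8$, since $s_j^2-2|s_j|=8$ there. This is the sharpening already used in the $k=4$ exclusion of Theorem~\ref{base:integrality}, and it may be useful later in the paper, but it is not needed for the lemma as stated.
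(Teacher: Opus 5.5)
Your proof is correct and is essentially the paper's argument: the square-sum identity combined with the pointwise inequality $2|s_j|\le s_j^2$ on $\{0,\pm2,\pm4\}$. The only difference is bookkeeping. The paper works with the full rows (square sum $4N$), uses the saving of four at the marked column, whose sum is $4$, and then deletes that column. You instead apply the reduced Gram identity~\eqref{common:gram} directly to get $\sum_j s_j^2=4(d-3)$, and both routes give $2N-8=2(d-3)$.
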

\begin{proof}
Let $s_j$ be the sum of the four full rows in column $j$.
Orthogonality gives $\sum_js_j^2=4N$. The values are
$0,\pm2,\pm4$, so $|s_j|\le s_j^2/2$, with a saving of four
when $|s_j|=4$. The marked column has sum four. Its saving and its
deletion therefore give an upper bound $2N-8=2(d-3)$.
\end{proof}

\subsection{A general normality bound}
\begin{theorem}\label{syl:generic}
For every marked real Hadamard realization of order $N\ge8$, with
$d=N-1$, the truncation $P_{m,n}$ is integral and normal whenever
$m\ge(d-3)n/2+d-1$.
\end{theorem}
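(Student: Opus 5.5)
The plan is to apply Theorem~\ref{common:extraction} in the integer chart~\eqref{common:chart}. There $Q_{m,n}=\{y\in[0,n]^d:v_i\cdot y\ge b_i\}$, the rows are sign vectors, and the $b_i$ share one parity (as shown at the end of the proof of Theorem~\ref{base:integrality}, using $4\mid N$). So it suffices to show the following: at every box point $y\in[0,n]^d$, at most three simplex rows have slack $v_i\cdot y-b_i$ below $C=\lfloor d/2\rfloor$. Theorem~\ref{common:extraction} then gives integrality and IDP of $Q_{m,n}$. Translating back by $x=2y-n\ones$, exactly as in the proof of Corollary~\ref{common:upper-normality}, gives integrality of $P_{m,n}$ in $\Lambda_n$ and normality in every degree $(kn+2\Z)^d$.

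To bound the number of low-slack rows, I work in source coordinates. With $x=2y-n\ones\in[-n,n]^d$, the slack of row $i$ equals $(v_i\cdot x+m)/2$. Suppose four distinct rows $v_1,\dots,v_4$ all had slack at most $C-1$ at some box point, i.e.\ $v_i\cdot x\le 2C-2-m$. Summing gives $s\cdot x\le 4(2C-2)-4m$ with $s=v_1+\cdots+v_4$. On the other hand, Lemma~\ref{common:general-quartet} gives $s\cdot x\ge-n\|s\|_1\ge-2(d-3)n$. Hence $4m\le 2(d-3)n+8C-8$. Since $d$ is odd, $C=(d-1)/2$, so $8C-8=4d-12$ and this reads $m\le (d-3)n/2+d-3$. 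This contradicts the hypothesis $m\ge (d-3)n/2+d-1$. So at most three rows can be protected, as required.

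The main obstacle is getting the constants exactly right: the slack scaling factor $1/2$ between the chart and the source, the strictness ``below $C$'' (integer slacks, so at most $C-1$), and the parity of $d$. The stated bound appears to carry slack of $2$, which suggests either a looser accounting or care about $C$ versus slack in the chart. I would recheck this, in particular whether slack should be measured as $v_i\cdot y-b_i$ (integer) or with a half-integer offset.

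Two further checks are needed. First, the hypothesis $d\ge2$ of Theorem~\ref{common:extraction} holds since $N\ge8$. Second, the range condition $m<dn$ must make $Q_{m,n}$ nonempty, which is automatic since it contains $0$ in source coordinates. $N\ge8$ is also needed for Lemma~\ref{common:general-quartet} to have four distinct rows with a meaningful bound.
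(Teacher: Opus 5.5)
Your route is the paper's: common parity of the $b_i$ in the chart, Lemma~\ref{common:general-quartet} to exclude four low-slack rows, then Theorem~\ref{common:extraction} and the degree-$k$ chart translation. The key counting step is wrong as written, though. You replace ``slack below $C$'' by ``slack at most $C-1$'' on the grounds that slacks are integers. The hypothesis of Theorem~\ref{common:extraction} concerns \emph{every} box point, not only lattice points. Its proof uses the hypothesis at $\xi=Z/k$, where the chart slack $(v_i\cdot Z-kb_i)/k$ lies only in $\frac1k\Z$. A value such as $C-\frac1k$ is below $C$ but not at most $C-1$. So you have only verified the condition at integer points, and that does not feed into the extraction theorem. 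This is also the source of the ``spare $2$'' you noticed. Your argument would seem to give the result already at the next odd value $m=(d-3)n/2+d-2$, and it does not justify that.

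The fix is to keep the inequality real and strict. In source coordinates the slack is $(v_i\cdot x+m)/2$, so slack below $C$ means $v_i\cdot x<2C-m=d-1-m\le-(d-3)n/2$ under the hypothesis. Summing four such rows gives $s\cdot x<-2(d-3)n\le-n\|s\|_1$, which is impossible in the cube. This is exactly the paper's computation. It shows that the stated constant $d-1$ carries no slack: it is precisely what the real-valued slack condition requires. With this correction the rest of your proof goes through as written.
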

\begin{proof}
Normalize the marked column to ones. Choose one row and multiply
each remaining column by its entry in that row. This signed-coordinate
change preserves the cube, every source degree lattice, and addition.
The chosen simplex row now has sum $d$, and orthogonality makes every
other simplex row have sum $-1$. In the resulting integer chart, the right-hand sides
$b_i=(n\sum_jv_i(j)-m)/2$ have common parity: the two row sums
are $d$ and $-1$, so their difference is $nN/2$, an even integer.
Set $C=\lfloor d/2\rfloor$. Four slacks below $C$ would imply
$v_i\cdot x<-m+2C\le-(d-3)n/2$ for those four rows.
The general quartet estimate of Lemma~\ref{common:general-quartet},
$\|\sum_{i=1}^4v_i\|_1\le2(d-3)$, contradicts this in the cube.
Theorem~\ref{common:extraction} proves IDP.
The chart maps a degree-$r$ integer summand $p$ to
$2p-rn\ones$, so every split uses the full source lattice.
\end{proof}

The bound in Theorem~\ref{syl:generic} adds admissible parameters beyond
Corollary~\ref{common:upper-normality} exactly when $n\ge d$.

\subsection{Sylvester profiles and all-order bounds}
For the Sylvester matrix of order $N=2^k\ge16$, rows and columns are indexed
by $\F_2^k$, with entries $(-1)^{r\cdot j}$; the marked column is $j=0$.
Put $\rho=3N/8-1$.
A quadruple is \emph{closed} if the entrywise product of its full rows
is constant~\cite[Section~3]{Orrick}. For Sylvester rows this means their
labels sum to zero. Otherwise, translating one label to zero leaves
three independent labels; we call this the \emph{rank-three} case.
The following unmarked-column profiles follow by a character calculation.
\begin{lemma}[Sylvester quartets]\label{common:quartets}
For four distinct normalized rows, the absolute spatial coordinate
sums have one of the profiles
\[
 \begin{array}{c|ccc|c}
 &0&2&4&\text{total weight}\\ \hline
 \text{rank three}&3N/8&N/2&N/8-1&3N/2-4\\
 \text{closed}&3N/4&0&N/4-1&N-4 .
 \end{array}
\]
\end{lemma}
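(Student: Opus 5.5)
Throughout, write the column sum as $s_j=\sum_{i=1}^4 (-1)^{r_i\cdot j}$ for labels $r_1,\dots,r_4\in\F_2^k$, and let $j$ range over the $N-1$ nonzero columns; the marked column $j=0$ has sum $4$ and is excluded. The plan is to express each $s_j$ through the characters of the labels. First translate the quadruple by the label $r_1$. Translation multiplies column $j$ by $(-1)^{r_1\cdot j}$, so it preserves every $|s_j|$. Afterwards the labels are $0,a,b,c$ with $a,b,c$ distinct and nonzero, and
\[
 s_j=1+\chi_a(j)+\chi_b(j)+\chi_c(j),\qquad \chi_r(j)=(-1)^{r\cdot j}.
\]

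\emph{Closed case.} Here $c=a+b$ with $a,b$ independent, so $\chi_c=\chi_a\chi_b$. Then
\[
 s_j=(1+\chi_a(j))(1+\chi_b(j))\in\{0,4\}.
\]
We get $s_j=4$ exactly when $a\cdot j=b\cdot j=0$. This is a codimension-two subspace, containing $N/4$ columns, one of which is $j=0$. Hence there are $N/4-1$ spatial columns with $|s_j|=4$, none with $|s_j|=2$, and the remaining $3N/4$ with $s_j=0$. The total is $4(N/4-1)=N-4$.

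\emph{Rank-three case.} Now $a,b,c$ are independent. The map $j\mapsto(a\cdot j,b\cdot j,c\cdot j)$ from $\F_2^k$ to $\F_2^3$ is surjective, and each fiber has $N/8$ elements. Going through the eight sign patterns:
\begin{itemize}
\item the pattern $(0,0,0)$ gives $s=4$;
\item the three patterns of weight one give $s=2$;
\item the three patterns of weight two give $s=0$;
\item the pattern $(1,1,1)$ gives $s=-2$.
\end{itemize}
So $|s|=4$ on $N/8$ columns, $|s|=2$ on $4\cdot N/8=N/2$ columns, and $s=0$ on $3N/8$ columns. Removing $j=0$, which lies in the $(0,0,0)$ fiber, leaves $N/8-1$ spatial columns with $|s_j|=4$. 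The total weight is $4(N/8-1)+2(N/2)=3N/2-4$.

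Finally, check that the two cases are exhaustive. After translation, the labels $a,b,c$ are distinct and nonzero. Either they are linearly independent, or some dependency among them must be $a+b+c=0$, since any two distinct nonzero vectors are independent. That dependency is exactly the closedness condition $r_1+r_2+r_3+r_4=0$.

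There is no real obstacle here; the step needing care is the bookkeeping. First, the marked column must be removed from the $|s|=4$ class and not from another class. Second, the profile should be stated in absolute values, so that the $-2$ values are merged into the $|s|=2$ count.
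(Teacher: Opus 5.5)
Your proof is correct and matches the paper's argument: translate by the first label to obtain $0,a,b,c$, then split into the closed case ($a+b+c=0$) and the rank-three case. In each case you count the sign patterns over equal-sized fibers and remove the marked column from the $|s_j|=4$ class. Your factorization $(1+\chi_a)(1+\chi_b)$ and your exhaustiveness check only make explicit details that the paper leaves to the reader.
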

\begin{proof}
Multiply all four full character rows by the first. The remaining
labels are \(0,a,b,c\), with \(a,b,c\) distinct and nonzero.
If \(a+b+c=0\), their rank is two and the absolute sum over the
four equally repeated sign patterns is \(4,0,0,0\). Otherwise their
rank is three; over eight patterns the absolute sums are \(4\)
once, \(2\) four times and \(0\) three times. Deleting the trivial
column removes one weight four, giving the displayed profiles.
\end{proof}

\begin{theorem}[Sharp Sylvester integrality and all-order normality]
\label{syl:all-orders}
Let $N=2^k\ge16$ and $\rho=3N/8-1$. For every marked column of every matrix in the signed row/column
equivalence class of the Sylvester matrix, and positive odd source
parameters $n<m<(N-1)n$,
\[
 m\ge\rho n\ \Longrightarrow\ P_{m,n}\text{ is integral},\qquad
 m\ge\rho n+N-2\ \Longrightarrow\ P_{m,n}\text{ is normal}.
\]
For every positive odd $n$, the preceding admissible value
$m=\rho n-2$ has an explicit nonlattice vertex. Thus $\rho$ is the
sharp coefficient for a uniform upper integrality band; this does
not classify all smaller parameters.
\end{theorem}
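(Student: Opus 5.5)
The proof has three parts: integrality above $\rho n$, normality above $\rho n+N-2$, and an explicit nonlattice vertex at $m=\rho n-2$. All three run through the quartet profiles of Lemma~\ref{common:quartets}. The mechanism mirrors Theorems~\ref{base:integrality} and~\ref{syl:generic}, with Sylvester-specific $\ell_1$ bounds replacing the general ones.

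\textbf{Integrality.} Fix a vertex $x$ and complete the active cube normals with $k$ simplex normals, restricting to the free coordinates $F$ as before.

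First I show $k\le3$. Suppose four simplex rows were tight, and let $s$ be the sum of their rows. Then
\[
4m=-s\cdot x\le n\|s\|_1 .
\]
By the profiles, $\|s\|_1\le 2(N/2)+4(N/8-1)=3N/2-4$ in the rank-three case and $N-4$ in the closed case. So $4m\le(3N/2-4)n$, that is, $m\le\rho n$.

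When $m>\rho n$ this is a contradiction. At the endpoint $m=\rho n$ equality is forced, so every coordinate is at a cube endpoint wherever $s_j\ne0$. The only coordinates that can be free are those with $s_j=0$. On such coordinates the four tight rows satisfy a linear relation: restricted to the columns with $s_j=0$, the rows sum to zero. That makes the restricted $4\times|F|$ system rank-deficient, which contradicts invertibility of $M$ when $k=4$. The analogous closed case is excluded outright, since $N-4<3N/2-4$ gives strict inequality.

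With $k\le3$, Lemma~\ref{common:small-matrix} applies. I normalize one row to $\ones$ by coordinate signs; the common-parity argument of Theorem~\ref{base:integrality} then gives integral chart coordinates.

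Signed row/column equivalence and the choice of marked column only permute and sign the rows. Hence the quartet profiles hold for every member of the equivalence class after normalization. This needs a remark: the marked column of an equivalent matrix corresponds to some Sylvester column, and multiplying by that character reduces to the $j=0$ case.

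\textbf{Normality.} I apply Theorem~\ref{common:extraction} with $C=\lfloor d/2\rfloor=N/2-1$, after the same row normalization that gives $b_i$ common parity as in Theorem~\ref{syl:generic}.

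Suppose four rows had slack below $C$ at some box point. In the source coordinates, each such row has $v_i\cdot x<-m+2C=-m+N-2$. Summing gives
\[
4m-4(N-2)<-s\cdot x\le(3N/2-4)n,
\]
that is, $m<\rho n+N-2$. So for $m\ge\rho n+N-2$ at most three rows have small slack, and extraction yields IDP in the full source lattice.

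\textbf{Sharpness.} At $m=\rho n-2$ I need a vertex with four tight rank-three rows and determinant-$8$ free structure, giving half-integral chart coordinates. The analogue is the Paley vertex of Proposition~\ref{base:paley-counter}. I choose rows with labels $0,e_1,e_2,e_3$ and fix every coordinate with $s_j\ne0$ to $-n\,\mathrm{sgn}(s_j)$. This gives
\[
-s\cdot x=\|s\|_1 n=(3N/2-4)n,
\]
which exceeds $4m$ by $8$.

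That excess of $8$ must be absorbed. I move some coordinates off the cube, using free coordinates among the $s_j=0$ columns together with adjustments of two units. The free coordinates must come out even in the source, i.e.\ half-integral in the chart. I will try the $3N/8$ zero-sum columns whose patterns under $e_1,e_2,e_3$ have zero sum; their restricted $4\times4$ sign matrix should have determinant $\pm8$. Then I check that all other rows remain satisfied.

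\textbf{Main obstacle.} Two steps are the hardest: producing this explicit vertex uniformly in $N$ and $n$, and verifying feasibility of the remaining $N-4$ rows. For the feasibility check I expect to use the character structure, bounding $v\cdot x$ by the rank-three and closed profiles of triples of $v$ with the chosen rows. The endpoint case of the integrality argument is the secondary risk.
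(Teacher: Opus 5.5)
Your integrality and normality parts follow the paper's argument. Both rest on the rank-three bound $\|s\|_1\le 3N/2-4=4\rho$. At the endpoint $m=\rho n$ you use the equality analysis, which works for every $k\ge4$ and not only $k=4$, because any four rows of an invertible $M$ are independent on $F$. The remaining steps are common parity with Lemma~\ref{common:small-matrix}, and Theorem~\ref{common:extraction} with $C=N/2-1$.

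The gap is the sharpness assertion. You leave it as a plan, and the plan aims at configurations that cannot work.
\begin{itemize}
\item A free $4\times4$ block with $|\det M|=8$ never yields a nonlattice vertex. The restricted right-hand sides have common parity, so Lemma~\ref{base:small-sign-basis} makes the solution integral. A nonlattice vertex with four free coordinates needs $|\det M|=16$, that is, $MM^{\mathsf T}=4I$.
\item The free coordinates cannot be taken among the $s_j=0$ columns. On those columns the rows $0,e_1,e_2,e_3$ take only three patterns and sum to zero, so the block is singular. This is exactly what your own endpoint argument uses.
\item If every $s_j\ne0$ coordinate stays at $-n\operatorname{sign}(s_j)$, summing the four tight equations forces $m=\rho n$, not $\rho n-2$. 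The excess $8$ must be absorbed by moving coordinates with $s_j=\pm2$ off the boundary, and these become the free coordinates.
\item You never verify feasibility of the other $N-4$ rows. That check is where the real work lies.
\end{itemize}

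The missing idea is the paper's explicit configuration. Write $q=N/8$, and note that your rows $0,e_1,e_2,e_3$ are the paper's rows $0,1,2,4$.
\begin{itemize}
\item Put the weight-two low-bit columns at $n\epsilon(j)$, where $\epsilon$ is the character of the first high bit. This balancing makes each weight-two class contribute zero to the four chosen rows while keeping the other rows under control.
\item A constant choice fails. All $-n$ puts row $0$ at $n(1-6q)$, and all $+n$ puts rows $e_i$ at $n(1-4q)$; both are far below $-m$.
\item With this $f$, the Walsh table puts exactly rows $0,1,2,4$ at $-\rho n$ and every other row at least $-(q-1)n$.
\item Shift columns $1,2,4$ up by one and column $7$ down by one. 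These are the $s_j=\pm2$ columns with zero high part. Each row changes by $+2$ or $-2$ according to its low weight.
\item The four chosen rows become tight at $m=\rho n-2$, and every other row keeps slack at least $2qn-4\ge0$.
\item The free coordinates are $-n+1,-n+1,-n+1,n-1$, which have the wrong source parity, and their block is a $4\times4$ Hadamard matrix.
\end{itemize}
Without this or some other explicit, fully verified vertex, neither the nonlattice vertex at $m=\rho n-2$ nor the sharpness of $\rho$ is established.
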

\begin{proof}
First use the fixed character realization. Lemma~\ref{common:quartets}
gives $\|v_1+v_2+v_3+v_4\|_1\le3N/2-4=4\rho$.
At a vertex $x$, let $J$ be the coordinates strictly inside the cube.
The active simplex rows restricted to $J$ span $\R^J$. If $|J|\ge4$,
select four independent restrictions, and let $w$ be the sum of their
full rows. Activity gives
\[
 -4m=w\cdot x\ge-n\|w\|_1\ge-4\rho n.
\]
For $m>\rho n$ this is impossible. At equality, each nonzero $w_j$
forces $x_j=-n\operatorname{sign}(w_j)$, a cube endpoint. Hence $w$
vanishes on $J$, contradicting independence of the selected restrictions.
Thus $|J|\le3$, including at equality. In the integer chart, the row
sums $N-1$ and $-1$ give common-parity right-hand sides, as in
Theorem~\ref{syl:generic}; subtracting fixed coordinates preserves
that parity. Lemma~\ref{common:small-matrix} therefore makes
all free coordinates integral.

For normality put $C=\lfloor(N-1)/2\rfloor=(N-2)/2$.
At $m\ge\rho n+N-2$, four chart slacks below $C$ would imply
$v_i\cdot x<-m+2C\le-\rho n$ for those rows, contradicting the
quartet norm bound in the cube. Theorem~\ref{common:extraction}
and the degree-lattice chart prove normality.

For sharpness write $q=N/8$ and split a column label $j=(z,u)$
into three low bits $z$ and $k-3$ high bits $u$. Let $\epsilon(j)$
be the character of the first high bit and set
\[
 f(j)=\begin{cases}
 -1,&\wt(z)=0,1,\\
 \epsilon(j),&\wt(z)=2,\\
 1,&\wt(z)=3.
 \end{cases}
\]
Define the unnormalized Walsh transform by
$\widehat f(r)=\sum_{j\in\F_2^k}f(j)(-1)^{r\cdot j}$.
For a row label $(a,b)$, write $h=\wt(a)$. Character orthogonality
in the high bits gives
\[
\begin{array}{c|c|c}
 b&h&\widehat f(a,b)\\\hline
 0&0,1&-3q\\
 0&2,3&q\\
 \text{first high unit}&0,3&3q\\
 \text{first high unit}&1,2&-q\\
 \text{other}&\text{all}&0
\end{array}
\]
The cases $b=0$ and $b$ equal to the first high unit reduce respectively to the
low-bit sums $-4+2h+(-1)^h$ and
$\sum_{\wt(z)=2}(-1)^{a\cdot z}$, the latter equal to $3,-1,-1,3$
at weights $0,1,2,3$.

Delete column zero and start with $x_j=nf(j)$. Since $f(0)=-1$,
one has $v_r\cdot x=n(\widehat f(r)+1)$. Exactly rows $0,1,2,4$ have value $-\rho n$; every other row is at
least $-(q-1)n$. Add one at columns $1,2,4$ and subtract one at
column $7$. A row of low weight $h$ changes by
$3-2h-(-1)^h$, equal to $2$ for $h=0,1$ and $-2$ otherwise.
At $m=\rho n-2$ the four selected rows are tight, and all other
rows are feasible because their slack is at least $2qn-4\ge0$.
Extra equalities at $N=16,n=1$ cause no difficulty.
The only free coordinates are $-n+1,-n+1,-n+1,n-1$, and the
selected restrictions are
\[
 \begin{pmatrix}1&1&1&1\\-1&1&1&-1\\
 1&-1&1&-1\\1&1&-1&-1\end{pmatrix},
 \qquad MM^{\mathsf T}=4I.
\]
Together with the fixed cube coordinates these give full rank, so
$x$ is a vertex. Its free coordinates have the wrong source parity.
The parameters are admissible since $\rho\ge5$ is odd and
$n<\rho n-2<(N-1)n$. For any coefficient $c<\rho$, sufficiently
large odd $n$ gives $\rho n-2\ge cn$, proving uniform sharpness.

Finally, write any signed-equivalent Sylvester matrix as
$A_{ij}=\epsilon_i\sigma_j(-1)^{r_i\cdot c_j}$ with bijective labels.
Normalizing its marked column $j_0$ gives
\[
 A_{ij_0}A_{ij}=\sigma_{j_0}\sigma_j
                    (-1)^{r_i\cdot(c_j+c_{j_0})}.
\]
The unmarked translated labels are exactly the nonzero vectors.
Thus the marked simplex is a signed coordinate permutation of the
fixed one, preserving cubes, vertices, every degree's affine lattice,
and sums. All three assertions transfer.
\end{proof}

The normality band in Theorem~\ref{syl:all-orders} adds admissible parameters
beyond Corollary~\ref{common:upper-normality} exactly when $n\ge8-16/N$:
for odd $n$, this means $n\ge7$ at $N=16$ and $n\ge9$ at $N\ge32$.

\section{Scope and further questions}
The explicit dimension-eleven vertex answers the universal integrality
question negatively. The smoothness criterion settles the smoothness
question under the source assumptions and excludes this family as a
source of smooth nonnormal examples. The normality bounds address
specified upper parameter regions; the remaining parameters are
not classified by these results.

The Sylvester band $m\ge\rho n$, with $\rho=3N/8-1$, is sharp because
$m=\rho n-2$ has a nonlattice vertex at every odd scale. This rules out
any smaller uniform coefficient.
Normality is guaranteed within additive offset
$N-2$ at every Sylvester order $N\ge16$. The integral nonnormal
member $P_{5,1}$ shows that the integrality threshold itself need not
imply normality.

\paragraph{Machine-checked companion.}
The Lean~4 companion~\cite{Lean4,Mathlib} proves formal counterparts of
all 17 numbered results for the polytopes, affine lattices and facet
normals defined here, including every positive decomposition degree
and the marked-matrix transfers. It uses alternative proofs of
direct-extraction integrality (active-row parity and compact convexity)
and integrality in Proposition~\ref{common:nonnormal-example}
(Theorem~\ref{syl:all-orders}). The ancillary directory
\texttt{anc/lean} contains the sources, build instructions and the map
\texttt{paper\_coverage.json}. It uses Lean~4.34.0 and Mathlib commit
\texttt{5ed29652}, with the full revision pinned in the manifest.
This coverage concerns numbered statements; prose, attribution and
priority claims are reviewed separately. The proofs use only standard
Lean axioms, recorded in the companion's audit.

\paragraph{AI assistance.}
Generative AI tools assisted with proof exploration, verification code,
and editing. The author is responsible for the paper's entire content.

\end{document}